\numberwithin{equation}{section}
\newcommand{\OU}{\mathrm{OU}}
\newcommand{\bord}{\mathrm{bord}}
\newcommand{\DOU}{\mathrm{Dys}}
\newcommand{\finespace}{\mspace{1.0mu}}
\DeclareMathOperator{\Tr}{Tr}
\DeclareMathOperator{\vol}{vol}
\DeclareMathOperator{\Pf}{Pfaff}
\DeclareMathOperator{\spec}{spec}
\DeclareMathOperator{\diag}{diag}
\DeclareMathOperator{\GUE}{GUE}
\DeclareMathOperator{\GOE}{GOE}
\DeclareMathOperator{\GSE}{GSE}
\DeclareMathOperator{\minor}{minor}
\renewcommand{\Re}{\Realpart}
\DeclareMathOperator{\Realpart}{Re}
\title{Consecutive minors for Dyson's Brownian motions}
\subjclass[2000]{Primary: 60B20, 60G55; Secondary: 60J65, 60J10.}
\keywords{Dyson's Brownian motion, diffusions on spectra of minors, Markov process.}
\author{Mark Adler}
\email{adler@brandeis.edu}
\address{%
Department of Mathematics, Brandeis University,
Waltham, Mass 02454, USA.}
\thanks{Adler: The support of a National Science Foundation grant 
\#~DMS-07-04271  is gratefully acknowledged.}
\author{Eric Nordenstam}
\email{eric.nordenstam@uclouvain.be}
\address{Department of Mathematics, Universit\'e de Louvain, 1348
Louvain-la-Neuve, Belgium.}
\thanks{Nordenstam: Supported by the 
``Interuniversity Attraction Pole" at UCL (Center of excellence): 
Nonlinear systems, stochastic processes and statistical mechanics (NOSY). }
\author{Pierre van Moerbeke} 
\email{pierre.vanmoerbeke@uclouvain.be \and
vanmoerbeke@brandeis.edu}
\address{Department of Mathematics, Universit\'e de Louvain, 1348
Louvain-la-Neuve, Belgium and Brandeis University,
Waltham, Mass 02454, USA.}
\thanks{
Van Moerbeke: 
The support of a National
Science Foundation grant \# DMS-04-06287, a European
Science Foundation grant (MISGAM), a Marie Curie Grant
(ENIGMA), FNRS and ``Inter-University Attraction Pole
(Belgium)" (NOSY) grants is gratefully acknowledged. 
%PvM thanks S.R.S. Varadhan for several insightful conversations in the beginning of this project.
 }
\newcommand{\Prob}[1][]{\mathbb{P}\ifthenelse{\equal{#1}{}}{}{\finespace[\finespace#1\finespace]}}
\newcommand{\Probs}[1][]{\mathbb{P}^*\ifthenelse{\equal{#1}{}}{}{\mspace{-1mu}[\,#1\,]}}
\newcommand{\UR}{\mathcal{U}}
\newcommand{\AR}{\mathcal{A}}
\newcommand{\HR}{\mathcal{H}}
\newcommand{\BC}{{\mathbb C}}
\newcommand{\BH}{{\mathbb H}}
\newcommand{\HB}{\HR^{(\beta)}_n}
\newcommand{\rg}{\rightarrow}
 \newcommand{\al}{\alpha}
\newcommand{\no}{\nonumber}
\newcommand{\la}{\langle}
\newcommand{\ra}{\rangle}
\newcommand{\ga}{\gamma}
\newcommand{\dt}{\delta}
\newcommand{\Dt}{\Delta}
 \newcommand{\vr}{\varepsilon}
\newcommand{\sg}{\sigma}
\newcommand{\BR}{{\mathbb R}}
\newcommand{\lb}{\lambda}
\newcommand{\dis}{\displaystyle}
\newcommand{\Oh}{{\mathcal O}}
\def\be#1\ee{\begin{equation}#1\end{equation}}
\def\bea#1\eea{\begin{eqnarray}#1\end{eqnarray}}
\def\bean#1\eean{\begin{eqnarray*}#1\end{eqnarray*}}
\theoremstyle{definition}
\theoremstyle{plain}
\newtheorem{theorem}[subsection]{Theorem}
\newtheorem{corollary}[subsection]{Corollary}
\theoremstyle{remark}
\newtheorem{remark}[subsection]{Remark}
\begin{document}
\nocite{Bor1,Manon2,J1,McK,Nord,Oksendal,OU}

\begin{abstract}
In 1962, Dyson \cite{Dyson} introduced dynamics in random matrix models, in particular into GUE (also for $\beta=1$ and $4$), by letting the entries evolve according to independent Ornstein-Uhlenbeck processes. Dyson shows the spectral points of the matrix evolve according to non-intersecting Brownian motions. The present paper shows that the interlacing spectra of two consecutive principal minors form a Markov process (diffusion) as well. 
This diffusion consists of two sets of Dyson non-intersecting Brownian motions, with a specific interaction respecting the interlacing. This is revealed in the form of the generator, the transition probability and the invariant measure, which are provided here; this is done in all cases: $\beta=1,~2,~4$. It is also shown that the spectra of three consecutive minors ceases to be Markovian for $\beta=2,~4$.

 %on the two interlacing spectra of the minors, taken together.

\end{abstract}

\maketitle

 \section{Introduction}

 In 1962, Dyson \cite{Dyson} introduced dynamics in random matrix models, in particular into GUE, by letting the entries evolve according to independent Ornstein-Uhlenbeck processes. According to Dyson, the spectral points of the matrix evolve according to non-intersecting Brownian motions. The present paper addresses the question whether taking two consecutive principal minors leads to a diffusion on the two interlacing spectra of the minors, taken together. This is so! The diffusion is given by the Dyson diffusion for each of the spectra, augmented with a strong coupling term, which is responsible for a very specific interaction between the two sets of spectral points, to be explained in this paper. However the motion induced on the spectra of three consecutive minors is non-Markovian, for generic initial conditions. A further question: is the motion of two interlacing spectra a determinantal process? We believe this is not the case; but determinantal processes appear upon looking at a different space-time directions. These issues are  addressed in another paper by the authors.
 
 During the last few years, the question of interlacing spectra for GUE-minors have come up in many different contexts. In a recent paper, Johansson and Nordenstam \cite{J-N}, based on domino tilings results of Johansson \cite{J1}, show that domino tilings of aztec diamonds provide a good discrete model for the consecutive eigenvalues of GUE-minors. In an effort to put some dynamics in the domino tiling model, Nordenstam \cite{Nord} then shows that the shuffling algorithm for domino tilings is a discrete version of an interlacing of two Dyson Brownian motions, introduced and investigated by Jon Warren \cite{Warren}. One might have suspected that the Warren process would coincide with the diffusion on the spectra of two consecutive principal minors. They are different!
 
Non-intersecting paths and interlaced processes (random walks and continuous 
processes) have been investigated by several authors in many different 
interesting directions; see e.g. \cite{Forr}, \cite{Jo02},\cite{J1}, 
\cite{J}, \cite{TrWi04}, \cite{Kat},\cite{OCon},  \cite{Warren1}, 
\cite{Manon1}, \cite{Kat}, %\newline
 \cite{Koen},\cite{AvM05}, 
just to name a few. In particular, in ~\cite{TrWi04, AvM05}, partial 
differential equations were derived for the Dyson process and related processes.

 The plan of this paper is the following. We state precisely all the results 
in Section~\ref{sec:results}. 
Some usefull matrix equalities are derived in Section~\ref{sec:matrix} which 
are used in Section~\ref{sec:transition} to derive transition densities
for the various processes considered. 
Stochastic differential equations are derived in Sections~\ref{SDE1} 
and~\ref{SDE2}. 
The fact that the the spectra of three consecutive minors are not Markovian for generic initial conditions 
is demonstrated the last Section.

There is a companion paper by the same authors aiming at determining the kernel  for the point process related to the Dyson Brownian minor process along space-like paths \cite{ANV1}.

 \emph{Acknowledgement:} PvM thanks S.R.S. Varadhan for several insightful conversations
 in the beginning of this project.

%%%%%%%%%%%%%%%%%%%%%%%%%%%%%%%%
\section{The Ornstein-Uhlenbeck process and 
Dyson's Brownian motion}
\label{sec:results}
 Consider the space $\HR^{(\beta)}_n$ of $n\times n$ matrices $  B$, with entries $ B_{k\ell} \in \BR$, $\BC$, $\BH$ ($\beta=1,~2,~4$) satisfying the symmetry conditions
 \be \label{B-matrix}
 B_{k\ell}=B_{\ell k}^\ast.
 \ee
Any element $ z  \in \BR$, $\BC$, $\BH$ admits a decomposition $z=z^{(0)}+\sum_{r=1}^{\beta-1} z^{(r)}e_r $, with $e_i$'s satisfying%\footnote{with the $e_i$ having the usual representation\begin{equation*}
%e_1=\begin{pmatrix} i&0\\
%                    0&-i\end{pmatrix},\quad
%     e_2=\begin{pmatrix}  0&1\\
%                         -1&0\end{pmatrix},\quad                                                                                   e_3=\begin{pmatrix} 0&i\\
%                                    i&0\end{pmatrix}.
%\end{equation*}}
$$e^2_1=e^2_2=e^2_3=-1 ,~  e_1e_2=-e_2e_1=e_3 ,~ e_1e_3=-e_3e_1=-e_2 ,~   e_2e_3=-e_2e_3=e_1 .$$ 
The conjugate ${}^\ast$ of an element  $ z \in \BR$, $\BC$, $\BH$ and its norm are given by 
\begin{align*}
   z^\ast &=z^{(0)}-\sum_{r=1}^{\beta-1} z^{(r)}e_r,&
   |z|^2=z  z^\ast=\sum_{r=0}^{\beta-1} z_{  }^{(r)2} ,
%\lVert z \rVert^2=z\bar z=\sum_{r=0}^{\beta-1} z_{  }^{(r)2}  ,
\end{align*}
  with $z$ admitting a polar decomposition 
  $ 
  z=|z| u, \text{ with }  |u |^2=%z\bar z=
   \sum_{r=0}^{\beta-1} u^{(r)2}=1.
   $ 
The matrices $B\in \HR^{(\beta)}_n$, as in (\ref{B-matrix}), correspond to:

$$\HB=\left\{ \begin{aligned}
& \mbox{real symmetric $n\times n$ matrices, for $\beta =1$}\\
& \mbox{complex Hermitian  $n\times n$ matrices, for $\beta =2$}\\
 & \mbox{self-dual Hermitian $n\times n$ ``quaternionic" matrices, 
    %                               %                                                      
        for $\beta =4$}\\
\end{aligned}\right.
$$
with the compact groups of $\vol(\mathcal{U}_n^{(\beta)})=1$,
  \begin{equation}\label{Unitar}
\UR_n^{(\beta)}:=
\begin{cases}
O(n),&\beta =1\\
\UR(n),&\beta =2\\
{\rm Symp}(n),&\beta =4,
\end{cases}
\end{equation}
 acting on it by conjugation.
%
%The $\beta=4$ case corresponds to $2n\times 2n$ self-dual Hermitian matrices $\hat B$, of which the real spectrum is doubly degenerate. We shall define the $n$ distinct eigenvalues as the spectrum of $\tilde B$. 

For $\beta=4$, it is well known that the quaternionic entries $z$ can be represented as follows
\be \begin{aligned}
\label{quat}
z=z^{(0)}+\sum_{r=1}^{\beta-1} z^{(r)}e_r & \longmapsto \hat z=\begin{pmatrix}  
                                         z^{(0)}+iz^{(1)}& z^{(2)}+iz^{(3)}\\
                                         -z^{(2)}+iz^{(3)}& z^{(0)}-iz^{(1)}
                                                           \end{pmatrix}.
 \end{aligned}\ee
 %
%
%\begin{equation}\label{quat}\begin{split}
%B_{k \ell}=B_{k\ell}^{(0)}+\sum_{r=1}^{\beta-1} B_{k\ell}^{(r)}e_r & \longmapsto \hat B_{k \ell}=\begin{pmatrix}  
 %                                        B_{k\ell}^{(0)}+iB_{k\ell}^{(1)}& B_{k\ell}^{(2)}+iB_{k\ell}^{(3)}\\
 %                                        -B_{k\ell}^{(2)}+iB_{k\ell}^{(3)}& B_{k\ell}^{(0)}-iB_{k\ell}^{(1)}
 %                                                          \end{pmatrix}\\
%B_{  \ell k}=B_{k\ell}^{(0)}-\sum_{r=1}^{\beta-1} B_{k\ell}^{(r)}e_r&\longmapsto \hat B_{k \ell}=
%\begin{pmatrix}  
%                                         B_{k\ell}^{(0)}-iB_{k\ell}^{(1)}& -B_{k\ell}^{(2)}-iB_{k\ell}^{(3)}\\
 %                                        B_{k\ell}^{(2)}-iB_{k\ell}^{(3)}& B_{k\ell}^{(0)}+iB_{k\ell}^{(1)}
%                                                           \end{pmatrix},  
%\end{split}
%\end{equation}
So, the $n\times n$ quaternionic matrices $B$ can be turned into $2n\times 2n$ self-dual Hermitian matrices $\hat B$, of which the real spectrum is doubly degenerate. Here, we shall define the $n$ distinct eigenvalues as the spectrum of $  B$. Unless stated otherwise we shall be working with the $n\times n$ quaternionic matrices, rather than the $2n\times 2n$ Hermitian matrices. Also, when working with matrices having quaternionic entries, the {\em trace} will be defined in the usual way, that is as the sum of the diagonal entries of the $n\times n$-matrix.%as $\Tr B=\sum_1^n B^{(0)}_{kk}$.

The {\em determinant} of a matrix $%\widetilde 
 B\in \HR_n^{(4)}$ is given in terms of the $2n \times 2n$ matrix $\widehat B$ (as defined in (\ref{quat})), by the following procedure: first define the skew-symmetric $2n \times 2n$ matrix ${\mathbb B}$ by the following product:
 \begin{equation}
 \mathbb{B}:=\widehat B%\widetilde Q 
 \cdot \left[\begin{pmatrix}0 & 1 \\-1 & 0\end{pmatrix} \otimes I_n\right],
 %
%@@ C(\widetilde Q) = \widetilde Q \cdot  \qquad , \qquad j_2 = ,
\label{bold-b} 
 \end{equation}
 and then ``$ \det B$" is defined as %the Pfaffian of $\hat B$, i.e.,
 \be
 \det B:=  \Pf ({\mathbb B})=(\det({\mathbb B}))^{1/2}=   \sum_p (-1)^{n-\ell} \prod^\ell_1 B_{\al \beta} B_{\beta \gamma} \ldots B_{\delta \al},
 \label{2.24'}
 \ee
 where $p$ is any permutation of the indices $(1,2,\ldots,n)$ consisting ot $\ell$ exclusive cycles of the form $(\al\to \beta \to \gamma \to \cdots \to \alpha)$; see Mehta \cite{Mehta}. In particular, this means that
 \begin{equation}
\det (\lb I -   B) = \prod^n_1 (\lb-\lb_i), \quad   \text{spec}~   B = \{\lb_1,\ldots,\lb_p\},
\label{quatdet}\end{equation}
with the $\lb_i$ being the double eigenvalues of ${\mathbb B}$.

The following normalization constant $Z^{-1}_{n,\beta}$  will come back over and over again:
\begin{equation} \label{N}
 Z^{-1}_{n,\beta} := 2^{-\frac n2}\left(\frac{\beta}{\pi}\right)^{N_{n,\beta}}, \hspace*{1cm}~\mbox{with  }
N:=N_{n,\beta}:=\frac{n}{2}+\frac{\beta}4 n(n-1)  .%=\frac{n}{2}(1+\frac{\beta}{2}(n-1))  .
\end{equation}

  \bigbreak
 
 Dyson's idea was to let the free parameters of the matrix evolve according to the SDE (Dyson process) %on , that is to say the $\tilde B$ is an SDE for $\tilde B$ of the form
\begin{equation}
 \label{1.3}
\begin{aligned}
dB_{ii}&=-B_{ii}dt+\sqrt{\frac{2}{\beta}}db_{ii}, & \quad& \text{$i=1$, \dots, $n$}\\
dB_{ij}^{(\ell)}&=-B_{ij}^{(\ell)}dt +\frac{1}{\sqrt{\beta}}db_{ij}^{(\ell)}, & \quad &\text{$1\leq i < j\leq n$ and $\ell = 0$, \dots, $\beta-1$,}\\
\end{aligned}
\end{equation}
where $d b_{ii}$, for $i=1$, \dots, $n$, 
 and $db_{ij}^{(\ell)}$, for $1\leq i<j\leq n$ and 
$l = 0$, \dots, $\beta -1$, are independent, standard Brownian motions. 
 Since the Ornstein-Uhlenbeck diffusions are independent, the Dyson process on the matrix $ B$ has a generator, which is just the sum of the OU-processes above:
\begin{equation}
 \label{DOUgen}
\mathcal{A} _{\DOU}:=\sum_{i = 1} ^n \left(\frac1\beta \frac{\partial^2}{\partial B_{ii}^2}-B_{ii}
 \frac{\partial}{\partial B_{ii}}\right)+
  \sum_{1\leq i<j\leq n} \sum_{\ell = 0}^{ \beta-1}
  \left(\frac1{2\beta} \frac{\partial^2}{\partial  B^{(\ell)2}_{ij}  }-B^{(\ell)}_{ij}
  \frac{\partial}{\partial B^{(\ell)}_{ij}}
 \right),
\end{equation}
 with transition probability%\footnote{\label{foot2}%
%$  Z^{-1}_{n\beta} = 2^{\frac n2}(\frac{\pi}{\beta})^{\frac n2 (1+\frac\beta 2(n-1))  }$}
, setting $c:=e^{-t}$ and using the constant (\ref{N}),
\be\begin{aligned} \label{OU-tr}
 \Prob[ B_t\in d  B\,|\,   B_0=\bar B]&=:p(t,\bar B,  B)\,d  B\\
&=
 \frac{ Z^{-1}_{n,\beta}}{(1-c^2)^{N_{n,\beta}}}%\frac{n}{2}(1+\frac{\beta}{2}(n-1))}
  e^{-\frac{\beta}{2(1-c^2)}\Tr(  B-c\bar  B)^2}\,d  B,
\end{aligned}\ee
where $d  B$ is the product measure over all the independent parameters $B_{ii}$, $B_{ij}^{(\ell)}$. 
%Notice that for $\beta =4$, ``$\Tr$" means taking $1/2$ of the trace of the $2n\times 2n$ associated matrices. 
The transition probability (\ref{OU-tr}) satisfies the Fokker-Planck equation
\begin{equation} \frac{\partial p_{_{ }}}{\partial t}=\mathcal{A}_\DOU ^\top p_{_{ }} ,
\end{equation}
with 
\begin{equation} 
 \label{FP1}
\mathcal A_\DOU ^\top
=
 \frac{2}{\beta}\left(\frac{1}{2}\sum^n_{i=1} \frac{\partial}{\partial B_{ii}}  h^\beta\frac{\partial}{\partial B_{ii}} \frac{1}{h^\beta}+\frac{1}{4}   \sum_{1\leq i<j\leq n}\sum^{\beta -1}_{\ell =0} \frac{\partial}{\partial B_{ij}^{(\ell)}}  h^\beta  \frac{\partial}{\partial B_{ij}^{(\ell)}} \frac{1}{h^\beta}  \right),  
 \end{equation} 
with a delta-function initial condition, 
$ p_{ }(t,\bar B, B)\big\vert_{t=0}=\dt(\bar  B, B) 
,$  
and with invariant measure (density)  %{\bf Mark: A bit confused about the constants}
\begin{equation}\label{Inv1}
\lim_{t\rg\infty} p_{ }(t,\overline{  B},  B)= Z_{n,\beta}^{-1}(h(  B))^\beta,\qquad
\text{with $h:=h(  B):=e^{-\frac{1}{2}{\rm Tr~}  B^2}$}.
\end{equation}
 Dyson discovered in \cite{Dyson} the surprising fact that the process restricted to 
$\spec(  B):=\{\lb_1,\lb_2,\dots, \lb_n\}$ is Markovian as well.
 This is the content of Dyson's celebrated Theorem (Theorem \ref{Th:Dyson}). 

Before stating the main Theorem, we define diagonal matrices $X=\diag (x_1,\ldots,x_n)$ and $Y=\diag (y_1,\ldots,y_n)$,~ vectors $w,v\in \BR^n,~\BC^n,~ {\mathbb H}^{n}$ and the inner-product $\langle w~,~v\rangle=\sum_1^n w_i  v_i^\ast$. Then consider the integral %\footnote{For $\beta=4$, the diagonal matrices $X$ and $Y$ are doubly degenerate, which is the reason for the $\dt_{\beta,4}$ term.}
\be
 G _n^{( \beta)}(X,Y;w,v):=\int_{{\mathcal U}^{(\beta)}_n}
  dU e^{(\Tr XUYU^{-1}+2\Re \langle w,Uv\rangle)}
 ,\label{1.21}\ee
 and its integrand
 \be\label{Gint}
 {\mathcal G} _n^{ (\beta)}(U;X,Y;w,v):= e^{(\Tr XUYU^{-1}+2\Re \langle w,Uv\rangle)}
 .\ee
 %
%assuming $\vol(\mathcal{U}_n^{(\beta)})=1$, with
%\footnote{vol$(\mathcal{U}_n^{(\beta)})=1$.}
%
For $w=v=0$, this is the more familiar integral
 \begin{equation}
 F_n ^{(\beta)}(X,Y):=G _n^{(\beta)}(X,Y;0,0)=\int_{\mathcal{U}_n^{(\beta)}}  dU
e^{ \Tr XU YU^{-1}}\,
,\end{equation}
which for $\beta=2$ gives the Harris-Chandra-Itzykson-Zuber formula:
\begin{equation}
F_n ^{(2)}(X,Y)=\frac{\det [e^{x_iy_j}]_{1\leq i,j\leq n}}{\Dt_n(x)\Dt_n(y)}\prod^n_{r=1} r!
.\label{HCIZ}\end{equation}
Does the integral (\ref{1.21}) admit such a representation ? {\em This is an open problem}.

\medbreak

%We shall need the usual modified Bessel function\footnote{In the formula below $d\Omega(\om)$ is the element of surface area on the sphere $S^{d-1}$ and $\theta$ is the angle between the point $\om$ on the sphere and the axis $(1,0,\ldots,0)$. For $d=1$ the integral is interpreted as  $=e^x$.}  %{\bf Mark: Henry has at page 60 a  minus-sign in the exponential; maybe it does not matter}
%
%\begin{equation} 
%\frac{I_{(\frac{d}{2}-1)}(x)}{x^{(\frac{d}{2}-1)}}:=\frac{1}{(2\pi)^{\frac{d}{2}}}\int_{S^{d -1}}e^{x\cos\theta(\om)}d\Omega^{ }(\om),\qquad \text{$d =2$, 3, 4, \dots}
%\label{1.25}\end{equation} 
%
%
In the following Theorem, formulae (\ref{1.4}), (\ref{1.5'}) and (\ref{1.28}) are due to Dyson \cite{Dyson}.

\begin{theorem}  \label{Th:Dyson}%(Dyson ) 
 The Dyson process restricted to its spectrum $\spec(  B)=\lb:=\{\lb_1,\allowbreak
 \dots,\lb_n\} $ is Markovian with SDE given by:
%
%(Dyson Process):
%
\begin{equation} 
d\lb_i=\left(-\lb_i+\sum_{j\neq i}\frac{1}{\lb_i-\lb_j}\right)dt+\sqrt{\frac{2}{\beta}}
db_{ii},\quad \text{$i = 1$, \dots, $n$.}
\label{1.4}
\end{equation} 
Its transition probability\footnote{\label{footC}$C_{n,\beta}^{-1}$ is the norming constant for the Gaussian ensemble for general $\beta$, as obtained from the Selberg formula (see Mehta \cite{Mehta04}, formula (3.3.10)), (see (\ref{N}) for $N$) $$C_{n,\beta}^{-1}=(2\pi)^{-\frac{n}{2}}\beta^{ N} \prod^n_{j=1}\left(\frac{\Gamma \left(1+\frac{\beta  }{2}\right)}{\Gamma \left(1+\frac{\beta j}{2}\right)}\right).$$}, with $c:=e^{-t}$,   
\begin{multline}\label{1.20}
\Prob[\lb_t\in d\lb\bigr|  \lb_0=\bar\lb]=
p_{_{\lambda}}(t,\bar\lb,\lb)\,d\lb_1 \cdots d\lb_n\\
=\frac{C^{-1}_{n, \beta}}{(1-c^2)^{N_{n,\beta}}}e^{-\frac{\beta}{2(1-c^2)}\sum_1^n(\lb^2_i+c^2\bar\lb_i^{2})} F_n ^{(\beta)}\left(\frac{\beta c}{1-c^2}\lb,\bar\lb\right)|\Dt_n(\lb)|^{\beta}\prod_1^n d\lb_i
,\end{multline}
satisfies the Dyson diffusion 
equation, with delta-function initial condition 
($p_\lambda\big\vert_{t=0}=\dt(\lb,\bar\lb)$) (forward equation)
\begin{equation}
\frac{\partial p_{_{\lambda}}}{\partial t}=\AR^{\top}_{_{\lambda}} p_{_{\lambda}}, \text{ with }
\AR^{\top}_{_{\lambda}}:=\frac{1}{\beta}\sum^n_{i=1}\frac{\partial}{\partial\lb_i}\left(\Phi_n(\lb)\right)^{\beta}
\frac{\partial}{\partial \lb_i}\frac{1}{(\Phi_n(\lb))^{\beta}}.
\label{1.5'}
\end{equation}
The generator is
\begin{equation}
\mathcal{A}_\lambda= \sum_{i=1}^n\left(\frac1\beta  
\frac{\partial^2}{\partial \lb_i^2}+
  \left(-\lb_i  +\sum_{j\neq i}\frac{1}{\lb_i-\lb_j}\right)
  \frac{\partial }{\partial \lb_i }\right),
    \label{3.13''}
\end{equation}
%where
%\be
%\Phi^{(n)}_{Dy}(\lb)=e^{-\frac{1}{2}\sum_1^n\lb^2_i}\vert\Dt_n(\lb)\vert.\Phi^{(n)}_{Dy}(\lb)=e^{-\frac{1}{2}\sum_1^n\lb^2_i}\vert\Dt_n(\lb)\vert.\ee
and the invariant measure of the Dyson process on $  B$, projected onto 
$ \spec(B)$, is given by the $\GOE(n)$, $\GUE(n)$, $\GSE(n)$ measure for 
$\beta =1$, 2, 4 respectively:%\cdot dI_{Dy}^{(n,\beta)}$ is
\begin{equation}
%dI^{(n,\beta)}_{Dy}(\lb)=
C_{n,\beta}^{-1} \left(\Phi_n(\lb)\right)^{\beta}d\lb_1 \cdots  d\lb_n, \text{ with }
\Phi_n(\lb)=e^{-\frac{1}{2}\sum_1^n\lb^2_i}\vert\Dt_n(\lb)\vert. \label{1.28}
\end{equation}
%This is the $GOE(n), GUE(n), GSE(n)$ measure for $\beta =1,2,4$ respectively.
\end{theorem}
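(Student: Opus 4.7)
The strategy is to deduce all five statements from the matrix-level data \eqref{1.3}--\eqref{Inv1} by pushing forward under the eigenvalue map $B\mapsto\spec(B)$, exploiting the Weyl-type decomposition
\[
dB = c_{n,\beta}\,|\Dt_n(\lb)|^{\beta}\, d\lb_1\cdots d\lb_n\, dU
\]
attached to the diagonalisation $B=U\Lb U^{-1}$, where $dU$ is the normalised Haar measure on $\UR^{(\beta)}_n$ and $c_{n,\beta}$ is an explicit constant compatible with the Selberg normalisation $C_{n,\beta}^{-1}$. Each assertion then drops out by integrating over the auxiliary factor $U$.

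For the transition probability \eqref{1.20}, I would substitute this decomposition into \eqref{OU-tr}. By left-invariance of Haar measure one can assume $\bar B=\bar\Lb$, after which
\[
\Tr(B-c\bar B)^2 = \sum_i \lb_i^2 + c^2\sum_i \bar\lb_i^2 - 2c\,\Tr(U\Lb U^{-1}\bar\Lb),
\]
so the $U$-integral is exactly $F_n^{(\beta)}\!\bigl(\tfrac{\beta c}{1-c^2}\lb,\bar\lb\bigr)$. Reconciling $Z_{n,\beta}^{-1}$ with $C_{n,\beta}^{-1}$ via Selberg's formula produces \eqref{1.20}. The invariant measure \eqref{1.28} then follows either by letting $t\to\infty$ (so $c\to 0$) and using $F_n^{(\beta)}(0,\bar\Lb)=\vol(\UR_n^{(\beta)})=1$, or by projecting the matrix invariant measure \eqref{Inv1} through the same decomposition.

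For the SDE \eqref{1.4}, I would apply second-order perturbation theory for eigenvalues (equivalently, Itô's formula to the map $B\mapsto \lb_i(B)$), which gives at any $B$ with simple spectrum
\[
d\lb_i = (U^{-1}dB\,U)_{ii} + \sum_{j\neq i}\frac{|(U^{-1}dB\,U)_{ij}|^2}{\lb_i-\lb_j}.
\]
The Dyson miracle is that the noise in \eqref{1.3} is invariant in law under conjugation by $\UR_n^{(\beta)}$, so $U^{-1}dB\,U$ inherits the covariance structure of $dB$: the diagonal entry contributes variance $\tfrac{2}{\beta}dt$, while summing the $\beta$ independent real components of each off-diagonal entry yields $|(U^{-1}dB\,U)_{ij}|^2 = \beta\cdot\tfrac{1}{\beta}dt = dt$. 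Together with the OU drift $-\lb_i dt$ this produces \eqref{1.4}, whose coefficients depend only on $\lb$, whence Markovianity. The generator \eqref{3.13''} is read off from \eqref{1.4}, and \eqref{1.5'} is its formal $L^2$-adjoint put into divergence form via the identity $\tfrac{1}{\beta}\partial_{\lb_i}\log\Phi_n(\lb)^{\beta} = -\lb_i + \sum_{j\neq i}(\lb_i-\lb_j)^{-1}$.

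The chief technical obstacle is Markovianity itself: the perturbation formula \emph{a priori} couples $\lb$ with $U$, and only the $\UR_n^{(\beta)}$-invariance of the Brownian increments on $\HR^{(\beta)}_n$ collapses this $U$-dependence. For $\beta=4$ one must in addition keep track of the four real components of each off-diagonal quaternion and respect the convention that $\spec(B)$ denotes the $n$ distinct eigenvalues of $B$---the doubly degenerate eigenvalues of the $2n\times 2n$ realisation $\widehat B$---so that the effective variance counting lines up with \eqref{1.3}.
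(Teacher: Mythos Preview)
Your proposal is correct and follows essentially the same route as the paper. For \eqref{1.20} and \eqref{1.28} the paper also pushes \eqref{OU-tr} through the Weyl decomposition and identifies the angular integral as $F_n^{(\beta)}$; it integrates over both $U$ and $\bar U$ (conditioning on the initial spectrum) rather than fixing $\bar B=\bar\Lb$, but by conjugation invariance and translation invariance of Haar measure the double integral collapses to your single one. For \eqref{1.4} and \eqref{1.5'} the paper implements your abstract second-order perturbation formula concretely: it conjugates $B(t)$ to $\diag(\lb_1,\dots,\lb_n)$ and expands the characteristic polynomial of $\diag(\lb)+[dB_{ij}]$ by hand, which is exactly your formula specialised to $U=I$, after which the generator and its adjoint are read off just as you describe.
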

For completeness we shall prove~(\ref{1.20}), (\ref{1.28}) in 
Section \ref{sec:transition} and~\eqref{1.4}, \eqref{1.5'} in 
Section~\ref{sec:ito}.

It is remarkable that the Dyson process is not only Markovian upon restriction to 
the spectrum of any {\em single} principal minor $B$, $B^{(n-1)}$, $B^{(n-2)}$, \ldots, 
of sizes $n$, $n-1$, $n-2$, \ldots, but %$${\rm spec}~ B=\lb:=(\lb_1,\ldots,\lb_n) ,$$  but 
also upon restriction to any {\em two consecutive} principal minors, in particular,
$$(\spec B, \spec  B^{(n-1)}):=(\lb ,\mu):=((\lb_1,\dots,\lb_n),(\mu_1,\dots,\mu_{n-1})),$$
%, where $\tilde B^{(\ell)}=(\tilde B_{ij})_{1\leq i,j\leq\ell}$; 
with intertwining property
 \begin{equation}
\lb_1\leq\mu_1\leq\lb_2\leq\mu_2\leq \cdots \leq\lb_{n-1}\leq\mu_{n-1}\leq\lb_n.
\end{equation}%
We denote by $\AR_{\lb}$ and $\AR_{\mu}$ the generators of the consecutive spectra ${\rm spec~}  B$ and  ${\rm spec~}  B^{(n-1)}$, as defined in (\ref{1.5'}). %
Define the characteristic polynomials of the two consecutive minors $B$ and $B^{(n-1)}$,
\begin{align}\label{1.10}
P_n(x)&=\prod^n_{\al =1}(x-\lb_{\al}), & P_{n-1}(x)&=\prod^{n-1}_{\beta =1}(x-\mu_{\beta}),
\end{align}
and the Vandermonde determinants
\begin{equation}
  \label{vdm}
  \begin{split}
    \Dt_n(\lb)&:=\prod_{j>i}(\lb_j-\lb_i)\geq 0, \\
    \Dt_n(\lb,\mu)&:= \prod_{i=1}^n \prod_{j=1}^{n-1} (\lb_i-\mu_j)
    =\prod_1^n P_{n-1}(\lb_i)=\prod_1^{n-1} P_{n}(\mu_i),
  \end{split}
\end{equation} 
 with $\Dt_n(\lb,\mu)(-1)^{\frac{n(n-1)}{2}}\geq 0$ because of the intertwining.

 In order to state Theorem \ref{Th:Dyson2}, we need the following property of any matrix $B\in \HR^{(\beta)}_n $; not only can $B$ be conjugated by a matrix $U^{(n)}\in \UR_n^{(\beta)}$  (see (\ref{Unitar})), in the standard way, such that 
 \be\label{conj1}
 (U^{(n)})^{-1}BU^{(n)}=\mbox{diag}(\lb_1,\ldots,\lb_n),
 \ee
 but also by a matrix of the form $\begin{pmatrix}  U^{(n-1)}&0\\
                                     0&1\end{pmatrix} $, with $U^{(n-1)}\in \UR_{n-1}^{(\beta)}$, to yield a bordered matrix $B_\bord$ :
\begin{equation}\begin{aligned}
\begin{pmatrix}  U^{(n-1)}\!&\!0\\
                                     0\!&\!1\end{pmatrix} 
             B
             \begin{pmatrix}  U^{(n-1)}\!&\!0\\
                                     0\!&\!1\end{pmatrix} ^{-1}                        
  %                                 
 %    \begin{pmatrix}  {\bf \mu}&U^{(n-1)}v\\
  %                                   (U^{(n-1)}v)^\dagger&B_{nn}\end{pmatrix}\\
              =                                \begin{pmatrix}
  \mu_1 &        0    &   \cdots     &        0    &    r_1u_1 \\
     0          &\mu_2 &   \cdots     &       0    &     r_2u_2 \\
   \vdots            &   \vdots     & \ddots &   \vdots      &    \vdots  \\
  0            &        0 &  \cdots &  \mu_{n-1} &   r_{n-1}u_{n-1} \\            
  r_1  u^\ast_1&r_2  u^\ast_2& \cdots & r_{n-1}  u^\ast_{n-1} & r_n 
  \end{pmatrix}=:B_\bord
\end{aligned}\label{2.22}
\end{equation}
 with $| u_{i} | =1$ (angular variables) and with $r_i\geq 0$ for $1\leq i\leq n-1$ and $r_n$, given by
 \begin{align}\label{r}
r^2_k &:=-\frac{P_n(\mu_k)}{P'_{n-1}(\mu_k)} \geq 0, ~~1\leq k\leq n-1,&  r_n&:=\sum_1^n\lb_i-\sum_1^{n-1}\mu_i
.\end{align}
The conjugation in (\ref{2.22}) %is such that $U^{(n-1)}$ 
 transforms the last column $v$ of $B$ in the last column of the bordered matrix $B_\bord$ (except for the last entry); i.e.,
\be\label{v}
U^{(n-1)} v=(r_1u_1,\ldots,r_{n-1}u_{n-1})^\top, ~\mbox{and} ~B_{nn}=r_n,~\mbox{with } ~v:=(B_{1,n},\ldots,B_{n-1,n})^\top.
\ee
 These facts, (\ref{2.22}), (\ref{r}) and (\ref{v}), will be discussed and shown in Section \ref{sec:matrix}.

 \bigbreak

The next statement is the analogue of Theorem \ref{Th:Dyson} for the case of the spectra of two consecutive minors.

\begin{theorem}\label{Th:Dyson2}The Dyson process on $ B$ restricted to 
 $$(\spec  B,\spec  B^{(n-1)})=(\lb,\mu):=((\lb_1,\dots,\lb_n),(\mu_1,\dots,\mu_{n-1}))$$
 is a diffusion $(\lb(t),\mu(t))$ as well, with the following SDE: %($1\leq\al\leq n ,~1\leq\ga\leq n-1$)
\begin{equation}
\label{SDE-lambda0}
\begin{split}
  d\lb_{\al}=&
\left(-\lb_{\al}+\sum_{\vr\neq\al}\frac{1}{\lb_{\al}-\lb_{\vr}}\right)dt  
+\sqrt{\frac{2}{\beta}}
\frac{P_{n-1}(\lb_{\al})}{P'_n(\lb_{\al})
} \\
&  \times {\left(\sum_{1\leq i<j\leq n-1}\!\frac{\sqrt{2}~r_ir_j~\tilde{d  b_{ij}}}{(\lb_{\al}\!-\!\mu_i)(\lb_{\al}\!-\!\mu_j)}+\sum^{n-1}_{i=1}\frac{r^2_idb_{ii}}{(\lb_{\al}\!-\!\mu_i)^2}\right.}%\qquad  
 \displaystyle{\left.+\sum_{i=1}^{n-1}\frac{\sqrt{2}~r_i \tilde{d  b_{in}}}{\lb_{\al}\!-\!\mu_i}+db_{nn}\!\right),%\quad 1\leq\al\leq n
  }  \\
  d\mu_{\ga}=&\left(-\mu_{\ga}+\sum_{\vr\neq\ga}\frac{1}{\mu_{\ga}-\mu_{\vr}}\right)dt +\sqrt{\frac{2}{\beta}}db_{\ga\ga},%\quad 1\leq\beta\leq n-1
\end{split}
\end{equation}
in terms of independent standard Brownian motions $\{db_{ii},\tilde{d   b_{ij}}\}_{1\leq i<j\leq n}$. %, with variance $=1$. 
 Its transition probability\footnote{\label{footZhat}The constant reads  $$
\hat Z^{-1}_{n,\beta}=\displaystyle{\frac{\beta^{N_{n,\beta}} (\Gamma(1+\frac{\beta}{2}))^{n-1}}{(2\pi)^{\tfrac n2}\pi^{\frac \beta 2 (n-1)}\prod^{n-1}_{j=1}\Gamma (1+\frac{\beta j}2)
(\vol(S^{\beta-1}))^{n-1}  }},
$$
 and 
$\vol (S^k)=2\pi \prod_{i=1}^{k-1} \left(2\int_0^{\pi/2}(\cos \theta)^{i }d\theta\right)$ for $k\geq 2$,
$\vol(S^0)=1$ and  $\vol(S^1)=2\pi$,  which is proved by induction on $k$; so $\vol(S^2)=4\pi $ and $\vol(S^3)=2\pi^2$.} is given by:
%\begin{multline} \label{1.24}
% \Prob [(\lb_t,\mu_t)\in \, (d\lb,d\mu) \, |\,(\lb_0,\mu_0)=(\bar\lb,\bar\mu)] =:p_{\lambda\mu} \left(t,(\bar\lb,\bar\mu),(\lb,\mu)\right)\,d\lb\,d\mu\\=\frac{\hat Z^{-1}_{n,\beta}}{(1-c^2)^{N}}e^{-\frac{\beta}{2(1-c^2)}\sum_1^n(\lb^2_i+c^2\bar\lb_i^{2})}F_{0,0}^{(n-1,\beta)}\left(\frac{\beta c}{1-c^2}\mu,\bar\mu\right) \prod^{n-1}_{i=1}\left(\frac{I_{(\frac{\beta}{2}-1)}(\frac{2\beta c r_i\bar r_i}{1-c^2})}{(\frac{2\beta c r_i\bar r_i}{1-c^2})^{\frac\beta{2}-1}}\right) \\\times  e^{\frac{\beta c r_n\bar r_n}{1-c^2}}   |\Dt_n(\lb)\Dt_{n-1}(\mu)|~|\Dt_n(\lb,\mu)|^{(\frac{\beta}{2}-1)} \prod^{n}_{1}d\lb_i\prod_1^{n-1}d\mu_j\end{multline}
%
%The transition probability %Fokker-Planck equation for its
%
\be\begin{aligned}
\label{1.24}
\lefteqn{\hspace{-1.5cm}p_{\lambda\mu} \left(t,(\bar\lb,\bar\mu),(\lb,\mu)\right)\,d\lb\,d\mu 
}\\
 =& \Prob [(\lb_t,\mu_t)\in \, (d\lb,d\mu) \, |\, (\lb_0,\mu_0)=(\bar\lb,\bar\mu)]
 \\
 =&\frac{\hat Z^{-1}_{n,\beta}}{(1-c^2)^{N}}e^{-\frac{\beta}{2(1-c^2)}\sum_1^n(\lb^2_i+c^2\bar\lb_i^{2})}
 \int_{(S^{\beta-1})^{2(n-1)}}\prod_1^{n-1}d\Omega^{(\beta-1)}(u_i)
d\Omega^{(\beta-1)}(\bar u_i) 
\\
& \times 
G _{n-1}^{( \beta)} \Bigl(\frac{\beta c}{1-c^2} \mu,\bar \mu; ~\frac{\beta c}{1-c^2}(r_iu_i)_{1}^{n-1},
(\bar r_i \bar u_i)_{1}^{n-1}\Bigr) \\
&\times  e^{\frac{\beta c r_n\bar r_n}{1-c^2}}  
 |\Dt_n(\lb)\Dt_{n-1}(\mu)|~|\Dt_n(\lb,\mu)|^{(\frac{\beta}{2}-1)} \prod^{n}_{1}d\lb_i\prod_1^{n-1}d\mu_j,
\end{aligned}\ee
where the $r_i$'s are given by (\ref{r}). It is also a solution of the following forward diffusion equation, with delta-function initial condition 
%($p_{\beta}\big\vert_{t=0}=\dt((\lb,\mu),(\bar\lb,\bar\mu))$),
\begin{equation}
\frac{\partial p_{\lambda\mu}}{\partial t}=\AR
 ^\top p_{\lambda\mu},
\text{ with }
\AR^\top
 := \AR_{\lb}^\top  +\AR_{\mu}^\top +\AR^\top_{\lb\mu} ,
\label{1.12'}\end{equation}
 where
\begin{equation}
\label{1.12''}
\AR^\top_{\lb \mu}:=
-\frac{2}{\beta}
%\sum_{{1\leq i\leq n} \atop{1\leq j\leq n-1}}
\sum_{i = 1}^n  \sum_{j=1}^{n-1}
\frac{\partial}{\partial \lb_i} \frac{\partial}{\partial \mu_j}
  \left( \frac{1}{(\lb_i-\mu_j)^2}\frac{P_{n-1}(\lb_i)}{P'_n(\lb_i)}
  \frac{P_n(\mu_j)}{P'_{n-1}(\mu_j)}\right)
\end{equation}
and where $\mathcal{A}_\lambda^\top $ and
$\mathcal{A}_\mu^\top $ are defined by~\eqref{1.5'}.
The Dyson process restricted to $(\lb,\mu)$ has invariant measure, (see (\ref{vdm})),%$dI^{(n,\beta)}(\lb,\mu)$~
\begin{equation} 
  \hat Z_{n,\beta}^{-1} \left( {\vol (S^{\beta-1})}{ }\right)^{2(n-1)}
e^{-\frac{\beta}{2}\sum_1^n\lb^2_i}|\Dt_n(\lb)\Dt_{n-1}(\mu)|~|\Dt_n(\lb,\mu)|^{ \frac{\beta}{2}-1 }\prod^n_1d\lb_i\prod_1^{n-1}d\mu_i
.\label{1.29}\end{equation} 
\end{theorem}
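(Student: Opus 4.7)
The strategy is to establish the three assertions of the theorem---the transition density~(\ref{1.24}), the SDE~(\ref{SDE-lambda0}), and the invariant measure~(\ref{1.29}) together with the forward equation~(\ref{1.12'})---in that order, the SDE being the main computational bottleneck.

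\emph{Transition density.} I would start from the matrix-level Gaussian kernel~(\ref{OU-tr}) and push it forward via the bordering~(\ref{2.22}) to the coordinates $(\mu_1,\dots,\mu_{n-1},r_1,\dots,r_n,u_1,\dots,u_{n-1},U^{(n-1)})$, using the matrix identities of Section~\ref{sec:matrix}. The Jacobian should produce a factor proportional to $|\Dt_{n-1}(\mu)|\prod_{k=1}^{n-1} r_k^{\beta-1}$, which after the substitution $r_k^2=-P_n(\mu_k)/P'_{n-1}(\mu_k)$ from~(\ref{r}), passing from $(\mu,r_1,\dots,r_{n-1})$ to $(\lb,\mu)$, recombines into $|\Dt_n(\lb)\Dt_{n-1}(\mu)|\,|\Dt_n(\lb,\mu)|^{\beta/2-1}$. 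Expanding $-\frac{\beta}{2(1-c^2)}\Tr(B-c\bar B)^2$ in the bordered coordinates yields a Gaussian piece in $\mu_i,\bar\mu_i$, in $r_n,\bar r_n$ and in $r_iu_i,\bar r_i\bar u_i$, together with a cross term of the form $\frac{\beta c}{1-c^2}\bigl(\Tr(X U^{(n-1)} Y(U^{(n-1)})^{-1})+2\Re\la w, U^{(n-1)}v\ra\bigr)+\frac{\beta c}{1-c^2}r_n\bar r_n$, with $X,Y$ the diagonal matrices of $\mu,\bar\mu$ and $w,v$ built from the column data. Integrating $U^{(n-1)}$ against this exponent produces exactly $G^{(\beta)}_{n-1}$ as defined in~(\ref{1.21}), and what remains are the angular integrations over $u_i,\bar u_i$ appearing in~(\ref{1.24}).

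\emph{SDE.} The equation for $\mu_\ga$ is Theorem~\ref{Th:Dyson} applied to the minor $B^{(n-1)}$, which itself evolves as a Dyson matrix because the restriction of~(\ref{1.3}) to its upper-left block is again an OU system of the same form. For $\lb_\al$, I apply It\^o's lemma to the eigenvalue functional $\lb_\al=\lb_\al(B)$: the drift reproduces $-\lb_\al+\sum_{\vr\neq\al}1/(\lb_\al-\lb_\vr)$ by the usual Dyson computation, while the martingale part equals $\sqrt{2/\beta}\sum_{i,j}\xi_i\xi_j^\ast\,db_{ij}$ with $\xi$ the unit eigenvector of $B$ at $\lb_\al$. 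In the bordered basis~(\ref{2.22}) this eigenvector has unnormalized components $r_iu_i/(\lb_\al-\mu_i)$ for $i<n$ and $1$ for $i=n$, with squared norm $\sum_k r_k^2/(\lb_\al-\mu_k)^2+1=P'_n(\lb_\al)/P_{n-1}(\lb_\al)$ (from expanding $\det(xI-B_\bord)$ by the last row and differentiating the ratio $P_n/P_{n-1}$ at $x=\lb_\al$). Substituting, regrouping the $db_{ij}$ and absorbing the $u_i$ phases into new independent standard Brownian motions $\tilde{db}_{ij}$ produces the explicit expression in~(\ref{SDE-lambda0}); the overall prefactor $\sqrt{2/\beta}\,P_{n-1}(\lb_\al)/P'_n(\lb_\al)$ is precisely the reciprocal of this squared norm.

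\emph{Fokker--Planck and invariant measure.} The forward equation~(\ref{1.12'})--(\ref{1.12''}) is a direct translation of the SDE via the standard correspondence; the cross generator $\mathcal{A}^\top_{\lb\mu}$ reflects the shared increments $db_{ii}$ and $\tilde{db}_{ij}$ appearing in both $d\lb_\al$ and $d\mu_\ga$, whose cross-covariation simplifies, again through the partial-fraction identity above, to the rational expression in~(\ref{1.12''}). The invariant density~(\ref{1.29}) is obtained by passing to the limit $c=e^{-t}\to 0$ in~(\ref{1.24}): the $G^{(\beta)}_{n-1}$ factor tends to $\vol(\UR_{n-1}^{(\beta)})=1$, the $e^{\beta c r_n\bar r_n/(1-c^2)}$ factor tends to $1$, the $2(n-1)$ angular integrations contribute $\vol(S^{\beta-1})^{2(n-1)}$, and the Gaussian exponent reduces to $-\frac{\beta}{2}\sum_1^n\lb_i^2$, yielding~(\ref{1.29}). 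The main obstacle is the SDE stage: the identification of $\xi$ in the bordered basis, the repackaging of the It\^o martingale $\sum\xi_i\xi_j^\ast db_{ij}$ into the rational-coefficient combination of $\tilde{db}_{ij}$ in~(\ref{SDE-lambda0}), and the verification that the resulting quadratic (co-)variations reproduce $\mathcal{A}_\lb$ and $\mathcal{A}^\top_{\lb\mu}$ all rest on the same partial-fraction identity for $P_n/P_{n-1}$ derived from~(\ref{r}), whose systematic exploitation is the algebraic core of the argument.
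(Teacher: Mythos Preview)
Your proposal is correct and follows essentially the same route as the paper: the transition density is obtained by pushing the matrix OU kernel~(\ref{OU-tr}) through the bordered coordinates~(\ref{2.22}) and integrating out $U^{(n-1)}$ and the angular variables (Section~\ref{sec:transition}), the SDE is derived by first-order perturbation of the eigenvalues of $B_\bord$ using the key identity $\sum_i r_i^2/(\lb_\al-\mu_i)^2+1=P'_n(\lb_\al)/P_{n-1}(\lb_\al)$ of~(\ref{2.28}) (Section~\ref{SDE2}), and the invariant measure and forward operator follow by letting $c\to 0$ and reading off the cross-covariation $d\lb_i\,d\mu_j$. The only cosmetic difference is that you package the first-order perturbation through the explicit eigenvector $\xi$ of $B_\bord$ (Hellmann--Feynman), whereas the paper expands $\det(B_\bord+[dB_{ij}]-\lb I)$ case by case; these are the same computation and both hinge on~(\ref{2.28}).
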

The SDE~\eqref{SDE-lambda0} and generator~\eqref{1.12'} are computed in 
Section~\ref{SDE2} while the espressions for transition density~\eqref{1.24} 
and invariant measure~\eqref{1.29} are proved in Section~\ref{sec:transition}.
 
Note that it is an immediate consequence of Theorem~\ref{Th:Dyson} 
that the generator $\mathcal{A}_\DOU$, defined in
 (\ref{DOUgen}), acting on the $\lb_i$ and $\mu_i$, has the form
\begin{equation}
\mathcal{A}_\DOU (\lb_i)=\mathcal{ A}_\lambda(\lb_i)
\quad\text{ and } \quad
\mathcal{A}_\DOU (\mu_i)=\mathcal{ A}_\mu(\mu_i)
,\label{restr}
\end{equation}
where $\mathcal{ A}_\lambda$ and $\mathcal{ A}_\mu$ are
defined by~\eqref{3.13''}.

\bigbreak

Whereas all statements in this paper hold for $\beta=1,2,4$, a part of it can be extended to general $\beta>0$, as will be shown in section \ref{SDE2}, after the proof of Theorem \ref{Th:Dyson2}:

\begin{corollary}\label{general}

For general $\beta>0$, the SDE  (\ref{SDE-lambda0}), in terms of the independent standard Brownian motions $\{db_{ii},\tilde{d   b_{ij}}\}_{1\leq i<j\leq n}$, defines a diffusion, whose generator is given by the same equations (\ref{1.12'}), and whose invariant measure is given by (\ref{1.29}). Moreover, this diffusion restricted to the $\lambda_i$'s (or to the $\mu_i$'s) is the standard Dyson Brownian motion (\ref{1.4}). 

\end{corollary}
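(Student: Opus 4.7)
The plan is to view (\ref{SDE-lambda0}) as a stochastic differential equation on the open interlacing cone for every $\beta>0$. Its coefficients are smooth there, since $r_i^2=-P_n(\mu_i)/P_{n-1}'(\mu_i)>0$ under strict interlacing, so standard theory yields a strong solution up to the first boundary hit; global existence follows a posteriori, once an invariant measure charging only the interior is exhibited.

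The final assertion of the corollary, that $\lambda(t)$ is a Dyson Brownian motion in the sense of (\ref{1.4}), I would check first, since it drives the rest. The drift of $d\lb_\al$ in (\ref{SDE-lambda0}) already matches (\ref{1.4}) verbatim, so only the diffusion has to be verified. The four families of noise coefficients combine, via $(a_1+\cdots+a_{n-1}+1)^2=\sum a_i^2+2\sum_{i<j}a_ia_j+2\sum a_i+1$ with $a_i=r_i^2/(\lb_\al-\mu_i)^2$, into the single factor
\[
\left(1+\sum_{i=1}^{n-1}\frac{r_i^2}{(\lb_\al-\mu_i)^2}\right)^{\!2}.
\]
The key algebraic input is the identity
\[
\frac{P_n'(\lb_\al)}{P_{n-1}(\lb_\al)}=1+\sum_{i=1}^{n-1}\frac{r_i^2}{(\lb_\al-\mu_i)^2},
\]
obtained by differentiating the partial-fraction decomposition $P_n(x)/P_{n-1}(x)=x-r_n-\sum_i r_i^2/(x-\mu_i)$ and setting $x=\lb_\al$; it collapses $d\langle\lb_\al\rangle$ to $(2/\beta)\,dt$. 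The cross covariation $d\langle\lb_\al,\lb_{\al'}\rangle$ similarly reduces, via the telescoping identity $\sum_i r_i^2/[(\lb_\al-\mu_i)(\lb_{\al'}-\mu_i)]=-1$ (again a consequence of the same partial fraction applied at two distinct zeros of $P_n$), to zero. L\'evy's characterization then yields (\ref{1.4}); the statement for $\mu(t)$ is immediate.

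For the generator, It\^o applied to (\ref{SDE-lambda0}) reproduces $\AR_\lb$ and $\AR_\mu$ by (\ref{restr}), while the cross term arises from the single shared noise $db_{\ga\ga}$ between $d\lb_\al$ and $d\mu_\ga$, whose covariation equals
\[
-\frac{2}{\beta}\frac{1}{(\lb_\al-\mu_\ga)^2}\frac{P_{n-1}(\lb_\al)}{P_n'(\lb_\al)}\frac{P_n(\mu_\ga)}{P_{n-1}'(\mu_\ga)}\,dt,
\]
which is precisely the integrand inside $\AR_{\lb\mu}^\top$. This gives (\ref{1.12'}) for any $\beta>0$.

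The main obstacle is the invariance of (\ref{1.29}), since for general $\beta$ no matrix model is available and the identity $\AR^\top\rho_\beta=0$ must be established directly. The cleanest route I see is a polynomial-continuation argument: writing $\rho_\beta$ for the density in (\ref{1.29}), one observes that $\log\rho_\beta$ is affine in $\beta$, so that every logarithmic derivative of $\rho_\beta$ is of degree one in $\beta$; products of two such derivatives are of degree two, and the factor $1/\beta$ appearing in the second-order part of $\AR^\top$ is absorbed upon multiplying through. Consequently $\beta\,\rho_\beta^{-1}\AR^\top\rho_\beta$ is a polynomial in $\beta$ of degree at most two, with coefficients rational in $(\lb,\mu)$. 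By Theorem~\ref{Th:Dyson2} it vanishes at the three distinct values $\beta=1,2,4$, hence identically. A direct but longer alternative is to verify $\AR^\top\rho_\beta=0$ by hand, using the two partial-fraction identities above together with $\sum_{\vr\neq\al}(\lb_\al-\lb_\vr)^{-1}=P_n''(\lb_\al)/(2P_n'(\lb_\al))$.
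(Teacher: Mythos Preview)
Your argument is correct and matches the paper's own proof in its essential point: the invariance of (\ref{1.29}) for general $\beta>0$ is obtained by observing that $\beta\,\rho_\beta^{-1}\AR^\top\rho_\beta$ is quadratic in $\beta$ (since $\log\rho_\beta$ is affine in $\beta$ and $\beta\AR^\top$ has coefficients at most linear in $\beta$), hence vanishes identically once it vanishes at $\beta=1,2,4$. That is exactly the paper's argument.

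Where you differ is in level of detail rather than strategy. The paper disposes of the generator claim and the restriction to $\lambda$ (or $\mu$) in one line, simply pointing to the form of $\AR^\top$; you instead recompute the covariations $d\langle\lambda_\al,\lambda_\gamma\rangle$ directly from the SDE via the partial-fraction identities (\ref{2.28}) and their off-diagonal analogue $\sum_i r_i^2/[(\lambda_\al-\mu_i)(\lambda_{\al'}-\mu_i)]=-1$. This is the same computation the paper records in the Remark following the proof of Theorem~\ref{Th:Dyson2} (formula (\ref{3.23'})), and it is indeed purely algebraic and $\beta$-independent, so it carries over to general $\beta$ without change. Your treatment makes explicit what the paper leaves implicit, but there is no genuine methodological difference.
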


The following corollary shows that the $\mu_i$'s in $\lb_{i}\leq\mu_i\leq \lb_{i+1}$ are repelled by the boundary and {\em fluctuate in unison} with the boundary points, when they get close.  
%We state a variation on Theorem \ref{Th:Dyson2}, aiming now at expressing the diffusions in  terms of $((\lb_1,\ldots,\lb_n),(r_1,\ldots,r_{n-1}))$, instead of $(\lb,\mu)$, with the $r_i$ defined by (\ref{r}). Given $(\lb_1,\ldots,\lb_{n})$, the map from $(r_1,\ldots,r_{n-1})$ to $(\mu_1,\ldots,\mu_{n-1})$ is given by solving formula (\ref{r}) with regard to the $r_i$.

\begin{corollary}\label{Cor:unis}
The nonnegative gaps $\mu_i-\lb_i$ and $\lb_{i+1}-\mu_i$ for $1\leq i\leq n-1$ satisfy, in the notation of (\ref{SDE-lambda0}), 

\be\begin{aligned}\label{unis}
d(\mu_i-\lb_i)&=F_i(\lb,\mu)dt+\sqrt{\mu_i-\lb_i}\sum_{1\leq k\leq \ell\leq n} \alpha_{k\ell}d\tilde b_{k \ell}
\\
d(\lb_{i+1}-\mu_i)&=\hat F_i(\lb,\mu)dt+\sqrt{\lb_{i+1}-\mu_i}\sum_{1\leq k\leq \ell\leq n} \hat\alpha_{k \ell}d\tilde b_{k \ell}
\end{aligned}\ee
with
\be\left\{\begin{aligned}
& \mbox{some}~~\alpha_{ k  \ell }= { \Oh}(1) ~~\mbox{for} ~~\mu_i\simeq\lb_i
~~~ \mbox{and some}~~~
\hat \alpha_{ij}={ \Oh}(1) ~~\mbox{for} ~~\mu_i\simeq\lb_{i+1} .
\\
&  F_i(\lb,\mu)\bigr|_{ \mu_i=\lb_i}>0, ~~~~~~~~
\hat F_i(\lb,\mu)\bigr|_{\mu_i=\lb_{i+1}}>0
 \end{aligned}\right.\ee

\end{corollary}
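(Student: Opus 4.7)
\textbf{The plan} is to differentiate $\mu_i - \lambda_i$ directly from (\ref{SDE-lambda0}) and analyze the resulting drift and diffusion coefficients in the limit $\mu_i \to \lambda_i$. The drift part of $d(\mu_i - \lambda_i) = d\mu_i - d\lambda_i$ reads
\[
F_i(\lambda,\mu) = -(\mu_i - \lambda_i) + \sum_{\epsilon \neq i} \frac{1}{\mu_i - \mu_\epsilon} - \sum_{\alpha \neq i} \frac{1}{\lambda_i - \lambda_\alpha}.
\]
To prove $F_i\bigr|_{\mu_i = \lambda_i} > 0$, I pair $\mu_\alpha$ with $\lambda_\alpha$ for each $\alpha \in \{1,\dots,n-1\}\setminus\{i\}$: the interlacing $\lambda_\alpha \leq \mu_\alpha$ makes the paired difference $\frac{1}{\lambda_i-\mu_\alpha} - \frac{1}{\lambda_i-\lambda_\alpha}$ nonnegative (checking signs separately for $\alpha < i$ and $\alpha > i$), while the unpaired $\lambda_n$ contributes $-1/(\lambda_i - \lambda_n) > 0$ strictly.

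For the diffusive part, the key input is the partial-fraction identity
\[
\frac{P_n(x)}{P_{n-1}(x)} = x - r_n - \sum_{j=1}^{n-1} \frac{r_j^2}{x-\mu_j},
\]
obtained by matching residues at the simple poles $\mu_j$ using (\ref{r}) and comparing leading coefficients. Differentiating and evaluating at $x=\lambda_i$ gives
\[
1 + \sum_{j=1}^{n-1} \frac{r_j^2}{(\lambda_i - \mu_j)^2} = \frac{P_n'(\lambda_i)}{P_{n-1}(\lambda_i)}.
\]
Squaring and summing the individual Brownian coefficients in the second line of (\ref{SDE-lambda0}) yields exactly the square of this quantity, so $(d\lambda_i)^2 = (2/\beta)\,dt$, in agreement with Theorem \ref{Th:Dyson}. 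Since only $db_{ii}$ is shared by $d\lambda_i$ and $d\mu_i$, their cross-variation is $(2/\beta) A\, dt$ with
\[
A = \frac{P_{n-1}(\lambda_i)\, r_i^2}{P_n'(\lambda_i)(\lambda_i - \mu_i)^2} = \frac{\prod_{j \neq i}(\lambda_i - \mu_j)\,\prod_{\alpha \neq i}(\mu_i - \lambda_\alpha)}{\prod_{\alpha \neq i}(\lambda_i - \lambda_\alpha)\,\prod_{j \neq i}(\mu_i - \mu_j)},
\]
after cancelling $(\lambda_i - \mu_i)^2$ between $P_{n-1}(\lambda_i)$ and $r_i^2 = -P_n(\mu_i)/P_{n-1}'(\mu_i)$. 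A Taylor expansion in $\mu_i - \lambda_i$ shows $A = 1 + O(\mu_i - \lambda_i)$, hence $(d(\mu_i - \lambda_i))^2 = (4/\beta)(1-A)\,dt = O(\mu_i - \lambda_i)\,dt$, which justifies the $\sqrt{\mu_i - \lambda_i}$ scaling.

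To put the representation in the form (\ref{unis}) I would inspect each Brownian coefficient in $d(\mu_i - \lambda_i)$ separately. Those involving the index $i$, namely the coefficients of $d\tilde b_{i\ell}$ (with $\ell > i$), $d\tilde b_{ki}$ (with $k < i$) and $d\tilde b_{in}$, carry a factor $r_i$ in the numerator against which $(\lambda_i - \mu_i)$ in the denominator cancels via the prefactor $P_{n-1}(\lambda_i)/P_n'(\lambda_i)$; what remains is $r_i \sim c\sqrt{\mu_i - \lambda_i}$ times a bounded rational function of the other eigenvalues, producing $\alpha_{k\ell}$ that are $O(1)$ and generically nonzero at the boundary. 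The remaining coefficients carry the full factor $P_{n-1}(\lambda_i)/P_n'(\lambda_i) = O(\mu_i - \lambda_i)$, and the $db_{ii}$ coefficient $\sqrt{2/\beta}(1-A)$ is likewise $O(\mu_i-\lambda_i)$; after extracting $\sqrt{\mu_i - \lambda_i}$ the corresponding $\alpha_{k\ell}$ are $O(\sqrt{\mu_i - \lambda_i})$ and in particular $O(1)$. The second line of (\ref{unis}) then follows by the reflection $(\lambda_\alpha,\mu_\gamma) \mapsto (-\lambda_{n+1-\alpha}, -\mu_{n-\gamma})$, which preserves the SDE (\ref{SDE-lambda0}) and turns the gap $\mu_i - \lambda_i$ into $\lambda_{n+1-i} - \mu_{n-i}$. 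The delicate step is the reduction to $(d\lambda_i)^2 = (2/\beta)\,dt$ via the partial-fraction identity; once that is in hand, the cancellation $(1 - A) = O(\mu_i - \lambda_i)$ falls out immediately and everything else is bookkeeping.
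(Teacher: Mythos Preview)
Your proof is correct and follows essentially the same route as the paper's: the drift positivity via pairing $\mu_\alpha$ with $\lambda_\alpha$ and using interlacing is identical, and your treatment of the diffusion coefficients---separating out the $db_{ii}$ term and showing its coefficient $1-A$ is $O(\mu_i-\lambda_i)$ while the remaining terms carry either a factor $r_i\sim\sqrt{\mu_i-\lambda_i}$ or the prefactor $P_{n-1}(\lambda_i)/P_n'(\lambda_i)=O(\mu_i-\lambda_i)$---matches the paper's decomposition exactly. The only cosmetic differences are that you verify $A\to1$ by cancelling $(\lambda_i-\mu_i)^2$ in the product form and Taylor-expanding (the paper instead rewrites $A-1$ via difference quotients $\frac{P_{n-1}(\lambda_i)-P_{n-1}(\mu_i)}{\lambda_i-\mu_i}\cdot\frac{P_n(\mu_i)-P_n(\lambda_i)}{\mu_i-\lambda_i}-P_{n-1}'(\mu_i)P_n'(\lambda_i)$), and that you obtain the second gap by the reflection symmetry $(\lambda_\alpha,\mu_\gamma)\mapsto(-\lambda_{n+1-\alpha},-\mu_{n-\gamma})$ where the paper simply says the argument is similar.
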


This is to be compared with the  Warren process \cite{Warren}, 
which also describes two intertwined Dyson processes $\lb$ and $\mu$, 
but with an entirely different interaction: namely the $\mu_i$'s 
near the boundaries of the intervals $[\lb_i,\lb_{i+1}]$ behave like 
the absolute value of one-dimensional Brownian motion near the origin.

% \newpage

\newcommand{\tb}{\tilde B}

%\bigbreak\noindent {\bf Remark}: The various transition probabilities $P_{\beta}(t,.,.)$ inherit their behavior at $t=0$ from $P_{\beta}(t,\overline{\tilde B},\tb)\big\vert_{t=0}=\dt(\overline{\tilde B},\tilde B)$. 

 \bigbreak

As we saw, the Dyson process on $  B$, restricted to the spectrum of one 
principal minor or the spectra of two consecutive minors leads to two 
Markov processes; opposed to that, we have the following statement, which will be proved in Section~\ref{sec:threeminors}.
\begin{theorem}\label{Theo:3minors}
The restriction of the Dyson process restricted to the following data 
 $$(\spec  B,\spec B^{(n-1)},\spec B^{(n-2)}):=(\lb,\mu,\nu) $$ is \emph{not} 
Markovian for generic initial conditions on $B$, i.e., the joint spectra of any three neighbouring set of minors 
of $  B$ are not Markovian, for $\beta=2$ and $4$. 
\end{theorem}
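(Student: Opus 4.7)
The plan is to disprove the Markov property for $(\lambda,\mu,\nu)(t)$ by producing a smooth test function $f(\lambda,\mu,\nu)$ for which the drift $\mathcal{A}_{\DOU}f$---computed by viewing $f$ as a function on $\HR_n^{(\beta)}$ via $\lambda=\spec B$, $\mu=\spec B^{(n-1)}$, $\nu=\spec B^{(n-2)}$ and applying the generator (\ref{DOUgen})---is not itself a function of $(\lambda,\mu,\nu)$ alone. If the drift depends on $B$ beyond $(\lambda,\mu,\nu)$, then two matrices $B,B'\in\HR_n^{(\beta)}$ in the same spectral fiber produce different infinitesimal drifts of $f(\lambda_t,\mu_t,\nu_t)$, contradicting Markovianity.

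I take $f=\Tr(B^3)\cdot\Tr(B^{(n-2)})=\bigl(\sum_\alpha\lambda_\alpha^3\bigr)\bigl(\sum_\gamma\nu_\gamma\bigr)$. Ito applied to the Dyson SDE (\ref{1.3}) decomposes the drift as $\mathcal{A}_\lambda f+\mathcal{A}_\nu f+\mathcal{A}_{\lambda\nu}f$, where the first two terms are functions of $(\lambda,\nu)$ by Theorem~\ref{Th:Dyson} and the cross term is
\[ \mathcal{A}_{\lambda\nu}f=\sum_{\alpha,\gamma}3\lambda_\alpha^2\cdot\frac{\langle d\lambda_\alpha, d\nu_\gamma\rangle}{dt}. \]
First-order eigenvalue perturbation gives $d\lambda_\alpha=\langle\psi_\alpha,(dB)\psi_\alpha\rangle+O(dt)$ and $d\nu_\gamma=\langle\phi_\gamma,(dB^{(n-2)})\phi_\gamma\rangle+O(dt)$, where $\psi_\alpha\in\BC^n$ (or $\BH^n$) is the unit eigenvector of $B$ for $\lambda_\alpha$ and $\phi_\gamma\in\BC^{n-2}$ (or $\BH^{n-2}$) that of $B^{(n-2)}$ for $\nu_\gamma$. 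Combined with the Brownian covariance structure of (\ref{1.3}), this yields
\[ \frac{\langle d\lambda_\alpha, d\nu_\gamma\rangle}{dt}=\frac{2}{\beta}\bigl|\langle\psi_\alpha^{(n-2)},\phi_\gamma\rangle\bigr|^2, \]
where $\psi_\alpha^{(n-2)}$ denotes the first $n-2$ components of $\psi_\alpha$. Using the completeness of $\{\phi_\gamma\}$ and the spectral decomposition $B^2=\sum_\alpha\lambda_\alpha^2|\psi_\alpha\rangle\langle\psi_\alpha|$ collapses the double sum into
\[ \mathcal{A}_{\lambda\nu}f=\frac{6}{\beta}\Tr\bigl((B^2)^{(n-2)}\bigr). \]

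Expanding $\Tr((B^2)^{(n-2)})=\sum_{i\le n-2,\,j\le n}|B_{ij}|^2$ and combining the quadratic trace identities for $\Tr((B^{(k)})^2)$, $k=n-2,n-1,n$, with the diagonal formulas $B_{n-1,n-1}=\sum\mu-\sum\nu$ and $B_{nn}=\sum\lambda-\sum\mu$, one obtains
\[ \Tr\bigl((B^2)^{(n-2)}\bigr)=G(\lambda,\mu,\nu)-|B_{n-1,n}|^2 \]
for an explicit polynomial $G$. Thus $\mathcal{A}_{\DOU}f(B)$ depends on $B$ only through $(\lambda,\mu,\nu)$ and the single real number $|B_{n-1,n}|^2$. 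The final step is to show that $|B_{n-1,n}|^2$ is \emph{not} a function of $(\lambda,\mu,\nu)$ for $\beta\in\{2,4\}$: the fiber $\{B:\spec B=\lambda,\,\spec B^{(n-1)}=\mu,\,\spec B^{(n-2)}=\nu\}$ has positive real dimension $\dim\HR_n^{(\beta)}-(3n-3)>0$, and an explicit $n=3$ computation---using the determinantal constraint $\det B=\prod_\alpha\lambda_\alpha$ to link $|B_{2,3}|^2$ only to a continuously adjustable cross phase $\Re(B_{12}B_{23}\overline{B_{13}})$ (or its quaternionic analogue for $\beta=4$)---exhibits a one-parameter deformation of $B$ along the fiber on which $|B_{2,3}|^2$ varies non-trivially. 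For $n\ge 4$, the non-constancy reduces to $n=3$ by restricting to block-diagonal matrices $B=B_{n-3}\oplus B_3$ with $B_{n-3}\in\HR_{n-3}^{(\beta)}$ fixed and $B_3\in\HR_3^{(\beta)}$ varying, since $|B_{n-1,n}|^2=|(B_3)_{2,3}|^2$ and the three relevant spectra decompose additively.

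The main obstacle is this last step: rigorously certifying non-constancy of $|B_{n-1,n}|^2$ on the generic fiber. The cleanest route is the explicit $n=3$ calculation sketched above, where one solves for $a,b,c,|B_{12}|^2$ from the first five symmetric functions of $(\lambda,\mu,\nu)$ and reads the single remaining constraint $\det B=\prod_\alpha\lambda_\alpha$ as one real equation in the two free variables $|B_{2,3}|^2$ and the cross phase. The ``generic initial conditions'' hypothesis is needed precisely to exclude the degenerate loci---non-simple spectra, or coincidences forcing the cross phase to be determined---on which the fiber collapses to a lower-dimensional set and this argument breaks down.
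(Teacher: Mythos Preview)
Your argument is correct and runs closely parallel to the paper's, with the same final obstruction, but the intermediate computation is organized differently and is in some ways cleaner.

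The paper also chooses a product test function $xy$ with $x=\Tr B^{(n-2)}=\sum\nu_\gamma$, but takes $y=\det B=\prod\lambda_\alpha$ rather than your $y=\Tr B^3=\sum\lambda_\alpha^3$. Its cross term $\tfrac{dx\,dy}{dt}$ is evaluated by direct It\^o on the matrix entries and equals $\tfrac{2}{\beta}\sum_{i\le n-2}\det(\text{minor}_{ii}B)$; this sum is then analyzed only for $n=3$ explicitly (where $\det(\text{minor}_{11}B)=B_{22}B_{33}-|B_{23}|^2$), with general $n$ handled by a one-line perturbation remark. Your route via first-order eigenvector perturbation and completeness of $\{\phi_\gamma\}$ collapses the cross term directly to $\tfrac{6}{\beta}\Tr((B^2)^{(n-2)})=\tfrac{6}{\beta}\bigl(G(\lambda,\mu,\nu)-|B_{n-1,n}|^2\bigr)$, which isolates the single offending quantity $|B_{n-1,n}|^2$ for \emph{all} $n$ without further work. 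Your block-diagonal reduction $B=B_{n-3}\oplus B_3$ then reduces the non-constancy of $|B_{n-1,n}|^2$ to the $n=3$ case more transparently than the paper's perturbation sketch.

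The crucial final step---that for $n=3$, $\beta\in\{2,4\}$, the quantity $|B_{23}|^2$ genuinely varies on the fiber $\{(\lambda,\mu,\nu)\text{ fixed}\}$---is identical in both proofs: the two constraints $\Tr B^2=\sum\lambda_\alpha^2$ and $\det B=\prod\lambda_\alpha$ leave $|B_{23}|^2$ coupled to the free cross-phase $\eta_1+\eta_2+\eta_3$ (respectively its quaternionic analogue), so it is not determined by the spectral data. Your sketch of this step is accurate; just make the $n=3$ solve fully explicit in the write-up, as that is where the ``generic initial conditions'' hypothesis actually enters.
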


\section{Some Matrix Identities}
\label{sec:matrix}

In this section, we prove formula (\ref{2.22}) for $r_k$ as in (\ref{r}). In the course of doing that we will also prove the following formulas:
\begin{equation} 
\sum_1^{n-1}r^2_i+\frac{r^2_n}{2}=\frac{1}{2}\left(\sum_1^n\lb_i^2-\sum_1^{n-1}\mu^2_i\right)
\quad\text{  and  } \quad
 \prod_1^{n-1}r^2_i=%\frac{\Dt_n(\lb,\mu)(-1)^{\frac{(n-2)(n+1)}{2}}}{\Dt^2_{n-1}(\mu)}=
  \frac{|\Dt_n(\lb,\mu)|}{\Dt^2_{n-1}(\mu)}.\label{2.16}\end{equation} 
One also has the (often used) identities
\begin{equation}
  \sum_{i=1}^{n-1}\frac{r_i^2}{\lb_\ell-\mu_i}+r_n -\lb_\ell=0 
\quad \text{ and }\quad
\frac{P_n'(\lb_\ell)}{P_{n-1}(\lb_\ell)}=\sum_{i=1}^{n-1}\left(\frac{r_i}{\lb_\ell-\mu_i}\right)^2+1
\label{2.28}
.\end{equation}
 Finally, one has, for fixed $ (\mu_1, \dots,\mu_{n-1})$ and fixed $(u_1,\dots,u_{n-1})$, 
%\bea
%\prod_1^{n-1}r^2_i=\frac{\Dt_n(\lb,\mu)(-1)^{\frac{(n-2)(n+1)}{2}}}{\Dt^2_{n-1}(\mu)}=\frac{|\Dt_n(\lb,\mu)|}{\Dt^2_{n-1}(\mu)}.
%\eea
%
\begin{equation} 
\prod_1^{n-1}dr^2_j~dr_n =(-1)^{n-1}\frac{\Dt_n(\lb)}{\Dt_{n-1}(\mu)}\prod_1^{n }d\lb_i
\label{2.15}.\end{equation} 

\begin{proof}
From the form of the matrix $B_\bord$ as in (\ref{2.22}), one checks (see (\ref{vdm}) and also the formula (\ref{2.24'}) for the determinant in the quaternionic case)
\begin{equation} \prod_1^{n } (\lb_i-\lb) = \det (B_\bord-\lb I)=
 \prod_1^{n-1} (\mu_i-\lb) 
 \left( \sum_{i=1}^{n-1}\frac{r_i^2}{\lb-\mu_i}+r_n-\lb\right)
 ,\label{2.24}\end{equation} 
 from which it follows that \footnote{the $\sg_k(\lb)$ are symmetric polynomials: $\sg_1(\lb)=\sum_i\lb_i$,   $\sg_2(\lb)=\sum_{i<j}\lb_i\lb_j$,  etc\dots.
The same for $\sg_k(\mu)$}
  \begin{multline}
 - \sum_{i=1}^{n-1}\frac{r_i^2}{\lb-\mu_i}-r_n +\lb
= \frac{P_n(\lb)}{P_{n-1}(\lb)}\\
  =\lb-(\sg_1(\lb)- \sg_1(\mu))%\\
 -\frac1\lb \left( \sg_1(\lb)\sg_1(\mu)+\sg_2(\mu)-\sg_2(\lb)-\sg_1^2(\mu)\right)+O(\frac1{\lb^2}).
 \label{2.25}\end{multline} 
Then taking residues in formula (\ref{2.25}) yields the first 
formulae~(\ref{r}) and thus the formula for $\prod_1^{n-1} r_i^2$ 
in~(\ref{2.16}). Comparing the coefficients of  $\lb^0$ and the $\lb^{-1}$ 
on both sides of~(\ref{2.25}) yields the first formula of~(\ref{2.16}).  
 Setting $\lb=\lb_ \ell$ in the expression (\ref{2.25}) and its derivative 
with regard to $\lb$ implies the two sets of $n$ identities (\ref{2.28}), 
in view of the definition of $P_n$.
Formula (\ref{2.15}) amounts to computing the Jacobian determinant of the 
transformation from~$ \lb_1$, \ldots, $\lb_n$ to~$r_1$, \ldots, $r_n$; 
to do this, take the differential 
 of the first of the $n$ expressions appearing 
in (\ref{2.28}) (as functions of~$ \lb_1$, \ldots, $\lb_n$ and~$r_1$, 
\ldots, $r_n$), keeping the $\mu_i$'s fixed and use the second of the 
expressions (\ref{2.28}):
\begin{equation}
\begin{aligned}
 0&= \sum_{i=1}^{n-1} \frac{dr_i^2}{\lb_\ell-\mu_i}+dr_n-\Bigl(1+\sum_{i=1}^{n-1}
 \frac{ r_i^2}{(\lb_\ell-\mu_i)^2}\Bigr)d\lb_\ell\\
 &=  \sum_{i=1}^{n-1} \frac{dr_i^2}{\lb_\ell-\mu_i}+dr_n-
    \frac{P_n'(\lb)}{P_{n-1}(\lb)} d\lb_\ell
 ,   \end{aligned}
\label{2.27}\end{equation}
  which in matrix form reads
    $$
  \Gamma
   \begin{pmatrix}
    dr_1^2\\
    dr_2^2\\
    \vdots  \\
    dr_{n-1}^2\\
    dr_n
     \end{pmatrix}
  =\diag\left( \frac{P_n'(\lb_1)}{P_{n-1}(\lb_1)}, 
    \ldots, \frac{P_n'(\lb_n)}{P_{n-1}(\lb_n)}\right)
  \begin{pmatrix}
    d\lb_1 \\
    d\lb_2 \\
        \vdots  \\
    d\lb_{n-1} \\
         d\lb_n
     \end{pmatrix}
     ,$$
where (by Cauchy's determinantal formula)
\begin{equation} \Gamma:=
 \begin{pmatrix}
  \frac1{\lb_1-\mu_1}&\frac1{\lb_1-\mu_2}&\ldots& \frac1{\lb_1-\mu_{n-1}} &1\\
   \frac1{\lb_2-\mu_1}&\frac1{\lb_2-\mu_2}&\ldots& \frac1{\lb_2-\mu_{n-1}} &1\\
   \vdots&\vdots&                                                  &   \vdots&          \vdots\\
    \frac1{\lb_n-\mu_1}&\frac1{\lb_n-\mu_2}&\ldots& \frac1{\lb_n-\mu_{n-1}} &1\\
   \end{pmatrix},
\text{with }\det \Gamma=(-1)^{(n-1)(\frac n2 +1)} \frac{\Dt_n(\lb)\Dt_{n-1}(\mu)}{\Dt_n(\lb,\mu)}
   .\label{2.29}\end{equation}
   The formula (\ref{2.29})  for the determinant follows from the observation that $\det \Gamma$ has 
homogeneous degree $1-n$ and vanishes when $\Dt_n(\lb)\Dt_{n-1}(\mu)$ does and blows up (simply) when 
and only when $\Dt_n(\lb,\mu)$ vanishes. Thus we have
    $$
   \frac{\partial(r_1^2,\ldots,r_{n-1}^2,r_n)}{\partial (\lb_1,\ldots,\lb_n)}= 
\prod_1^{n}  \frac{P_n'(\lb_i)}{P_{n-1}(\lb_i)}(\det \Gamma)^{-1}
 =(-1)^{n-1}\frac{\Dt_n(\lb)}{\Dt_{n-1}(\mu)} . $$
This concludes the proof of formulas stated in the beginning of this section.%Lemma~\ref{lemma1}.
\end{proof}

%\newpage

\section{Transition Probabilities}
\label{sec:transition}

A quick review of the Ornstein-Uhlenbeck process (see Feller \cite{Feller}): it is a diffusion on $\BR$, given by the one-dimensional SDE,
\begin{equation}
dx=-\rho x\, dt+\frac{1}{\sqrt{\beta}}\, db
,\label{T1}\end{equation}
and it has transition probability $(c:=e^{-\rho t})$
\begin{align*}
\Prob[x_t\in dx\,\vert\, x_0=\bar x]&=:
p_\OU(t; \bar x,x)\,dx\\
\\
&=  { \Bigl(\frac{ \rho\beta}{\pi (1-c^2)}\Bigr)^{1/2}}
\exp\left(-\frac{\rho\beta(x-c\bar x)^2}{ 1-c^2 }\right)\,dx.
\end{align*}
The transition probability is a solution of the forward (diffusion) equation, with $\delta$-function initial condition~\footnote{The backward equation becomes the heat equation with $(x,t)\mapsto (xe^{\rho t},\frac{1-e^{2\rho t}}{2\rho})$} %equation $(P_{\beta}\big\vert_{t=0}=\dt(X-\bar X))$
\begin{equation}
\frac{\partial p_\OU}{\partial t}=
\left(\frac{1}{2\beta}\frac{\partial^2}{\partial x^2}- 
\frac{\partial}{\partial x}(-\rho x)\right)p_\OU
=\frac{1}{2\beta}\left(\frac{\partial}{\partial x}
\phi_{\rho\beta}(x)\frac{\partial}{\partial x}
\frac{1}{\phi_{\rho\beta}( x)}\right)p_\OU,
\label{T3}\end{equation}
and invariant measure (density)
\begin{equation}
\label{T4}
\phi_{\rho\beta}(x)=\sqrt{\frac{\rho\beta}{\pi}} e^{-\rho\beta x^2}=
\lim_{t\rg\infty}p_\OU(t;\bar x,x).
\end{equation}
\begin{proof}[Proof of transition probabilities (\ref{OU-tr}), (\ref{1.20}) and (\ref{1.24}) ]  \hspace*{2cm} 

%{\tiny {.}}

\noindent {\bf (i)} {\em The Fokker-Planck equation for the transition probability of the Dyson process. }The Dyson process consists of running the free parameters of the matrix $B\in \HB$, as in (\ref{B-matrix}), according to independent Ornstein-Uhlenbeck processes, with $\rho=1$, the diagonal with $\beta \rg \beta/ 2$ and the off-diagonal parameters with $\beta \rg \beta $. Remembering the definition (\ref{N}) of $N=N_{n,\beta}$ and the definition of the trace (after (\ref{quat})), one has, setting $ c=e^{-t} $, and using (\ref{1.3}), (\ref{T1}) and (\ref{T3}), the transition probability for the Dyson process is given by\footnote{with constant $Z^{-1}_{n,\beta}=2^{-n/2}(\frac \pi \beta)^ {-N_{n,\beta}}$.}
\begin{align}
p (t,\bar B,B)=&\prod^n_{i=1}
p_{\OU}(t;\bar B_{ii},B_{ii})\prod_{1\leq i<j\leq n}
\prod_{\ell=0}^{\beta -1}
p_{\OU}(t;\bar B_{ij}^{(\ell)}, B_{ij}^{(\ell)})\label{T5} \\
%\no\\
=&\prod^n_{i=1}\left(\frac{e^{-\beta\frac{(B_{ii}-c\bar B_{ii})^2}{2(1-c^2)}}}{ ({2\pi}(1-c^2)/\beta)^{\frac{1}{2}}}\right)
\prod_{1\leq i<j\leq n}\prod_{\ell = 0 } ^{\beta -1} 
 \left(\frac{e^{-\beta\frac{(B_{ij}^{(\ell)}
-c\bar B_{ij}^{(\ell)})^2}{(1-c^2)}}}{ ( \pi (1-c^2)/ \beta)^{\frac{1}{2}}}\right) %\nonumber\\
\nonumber\\
=&\frac{1}{2^{n/2}\bigl(\frac \pi \beta (1-c^2) \bigr)^{N_{n,\beta}}}e^{-\frac{\beta}{2(1-c^2)}\Tr(B -c\overline{B})^2}
= \frac{Z_{n,\beta}^{-1}}{(1-c^2)^{N_{n,\beta}}}e^{-\frac{\beta}{2(1-c^2)}\Tr(B -c\overline{B})^2},
\nonumber%\\
%\no\\
%& &\frac{1}{2^{n/2}((1-c^2)\pi/\beta)^{\frac{n}{2}(1+\frac{\beta}{2}(n-1))}}e^{-\frac{\beta}{4(1-c^2)}{\rm tr}(\hat B -c\overline{\hat{B}})^2}\text{~for~} \beta =4.\no
\end{align}
 yielding (\ref{OU-tr}), while $\lim_{t\rg\infty} p_{ }(t,\overline{  B},  B)= Z_{n,\beta}^{-1}(h(  B))^\beta$ is immediate, showing (\ref{Inv1}).
%Remark that in the $\beta =4$ case, one  replaces $B\mapsto\hat B$ by its $2n\times 2n$ matrix realization, thus having to divide by 2:
%$$p(t,\bar B,B)=\frac{1}{2^{n/2}((1-c^2)\pi/\beta)^{N}}
%e^{-\frac{\beta}{4(1-c^2)}\Tr (\hat B -c\overline{\hat{B}})^2}.
%$$
  Moreover, from~(\ref{T3}), one computes for $p(t;\bar B,B)$,
\begin{align}
\frac{\partial }{\partial t}p(t;\bar B,B)=
&\,\frac{2}{\beta}\biggl[\frac{1}{2}\sum^n_{i=1}
\frac{\partial}{\partial B_{ii}}\phi_{\beta/2} (B_{ii}) \frac{\partial}{\partial B_{ii}}\frac{1}{\phi_{\beta/2} (B_{ii})} 
\\&
\qquad+\,\frac{1}{4}\sum_{1\leq i<j\leq n} \sum_{\ell = 0} ^{\beta -1}
\frac{\partial}{\partial B^{(\ell)}_{ij}}\phi_{\beta } (B^{(\ell)}_{ij}) \frac{\partial}{\partial B^{(\ell)}_{ij}}\frac{1}{\phi_{\beta } (B^{(\ell)}_{ij})}  
 \biggr]p(t;\bar B,B)
 \no\\
=&\,\frac{2}{\beta}\biggl[ \frac{1}{2}\sum^n_{i=1}
\frac{\partial}{\partial B_{ii}}h(B) \frac{\partial}{\partial B_{ii}}\frac{1}{h (B)}
\\
&\qquad +\frac{1}{4}\sum_{1\leq i<j\leq n} \sum_{\ell = 0} ^{\beta -1}
\frac{\partial}{\partial B^{(\ell)}_{ij}}h (B)\frac{\partial}{\partial B^{(\ell)}_{ij}} \frac{1}{h(B)}\biggr]p(t;\bar B,B)\no
\end{align}
with
\bea
h(B)=\text{constant}\times\prod^n_{i=1}\phi_{\beta /2}(B_{ii})\prod_{1\leq i<j\leq n\atop{0\leq\ell\leq\beta -1}}\phi_{\beta}(B_{ij}^{(\ell)}),
\eea
proving (\ref{FP1}).

\medbreak

\noindent {\bf (ii)}  {\em The transition probability (\ref{1.20}) and the invariant measure for the $\lambda_t$ process}. Set 
$$%\begin{align*}
\lb =(\lb_1,\ldots,\lb_n) ~~~
\text{ and } ~~~\bar\lb =(\bar\lb_1,\ldots,\bar\lb_n).%&
%&\text{ for $\beta=1$, 2,}%\\
%\lb&=(\lb_1,\ldots,\lb_n,\lb_1,\ldots,\lb_n)&
%\text{ and } \bar\lb&=(\bar\lb_1,\ldots,\bar\lb_n,\bar\lb_1,\ldots,\bar\lb_n)&
%&\text{  for $\beta=4$.}
$$%\end{align*}  
By the Weyl integration formula, given $B = U \lb U^{-1}$ and initial 
condition $\bar{B}=\bar U\bar\lb \bar U^{-1}$, express $dB=d(U\lb U^{-1})$ 
in formula (\ref{OU-tr}) in terms of spectral and angular variables 
$dB=Z_{n,\beta} C^{-1}_{n,\beta} |\Dt_n(\lb)|^{\beta}dU\prod_1^nd\lb_i$, with Haar measure $dU$ on 
$ \UR_n^{(\beta)} $ normalized such that {\rm Vol}$(\UR_n^{(\beta)})=1$, with $C^{-1}_{n,\beta}$  
defined in footnote \ref{footC}. 
%It implies the constant $  Z^{-1}_{n,\beta}$  gets replaced by $C^{-1}_{n,\beta}$, as defined in footnote \ref{foot4}. 
This yields, using the transition probability (\ref{T5}),  \begin{multline} 
\label{2.8} 
\Prob[B_t\in dB  \, \vert\,  B_0=\bar{B}]=p(t;\bar B,B) dB\\
\hspace*{.88cm}=\frac{C^{-1}_{n,\beta}}{(1-c^2)^{N}}e^{-\frac{\beta}{2(1-c^2)}\sum_1^n(\lb_i^2+c^2\bar\lb^2_i)}\\
\times ~e^{\frac{\beta c}{1-c^2} \Tr U \lb U^{-1} \bar U\bar\lb \bar U^{-1}}|\Dt_n(\lb)|^{\beta}dU\prod_1^nd\lb_i .
\end{multline}
Note that the constant $C^{-1}_{n,\beta}$ is compatible with the fact that for $t\to \infty$ this transition probability tends to the GUE-probability; see below.

We now compute the transition probability $p_{\lb }(t; \bar \lb,  \lb )d\lb$ for the spectrum of the Dyson process; this will be a model to compute the transition probability for the $(\lambda_t,\mu_t)$-process. So, consider
\be\begin{aligned}
\lefteqn{p_{\lb }(t; \bar \lb,  \lb )d\lb  
= \Prob_{ }(\lb_t\in d \lb \bigr| \lb_0=\bar{\lb})} 
\\
=&\int_{B _t\in \HB(  \lambda )}\int_{\bar B  \in \HB(\bar \lambda ) }
\Prob \left[B^{ }_t\in dB~\big\vert ~B^{ }_0=\bar B\right]
 \Prob   \left[B^{ }_0\in d\bar B \big\vert   \mbox{spec}(B_0^{ }) = \bar \lb   \right]
\\
\\
=&\int_{\left(\UR_ n^{(\beta)}\right) ^2}
\frac{C^{-1}_{n,\beta}}{(1-c^2)^{N}}e^{-\frac{\beta}{2(1-c^2)}\sum_1^n(\lb_i^2+c^2\bar\lb^2_i)} 
 e^{\frac{\beta c}{1-c^2} \Tr U \lb U^{-1} \bar U\bar\lb \bar U^{-1}}|\Dt_n(\lb)|^{\beta}dUd\bar U\prod_1^nd\lb_i 
\\
=&
 \frac{C^{-1}_{n, \beta}}{(1-c^2)^{N_{n,\beta}}}e^{-\frac{\beta}{2(1-c^2)}\sum_1^n(\lb^2_i+c^2\bar\lb_i^{2})} F_n ^{(\beta)}\left(\frac{\beta c}{1-c^2}\lb,\bar\lb\right)|\Dt_n(\lb)|^{\beta}\prod_1^n d\lb_i ,\end{aligned}\ee
 using (\ref{2.8}) above, using the following conditional probability formula:
 \be\begin{aligned}
\label{}
 {\Prob   \left[B^{ }_0\in d\bar B \big\vert  \mbox{spec} (B^{ }_0) =  \bar \lb  \right]} 
 =
\frac{\Prob   \left[B^{ }_0\in d\bar B ,~  \mbox{spec} (B^{ }_0) \in  d\bar \lb  \right]}
{\Prob   \left[  \mbox{spec} (B^{ }_0) \in d\bar \lb  \right]}=d\bar U^{ }
\end{aligned}\ee
and finally using the integration (\ref{1.21}),
%where $\tilde Z^{-1}_{n,\beta}$  is replaced by $C^{-1}_{n,\beta}$ to take account of the Weyl integration formula, with the usual understanding that . Now compute
%
\begin{multline*}
\int_{\UR_n^{(\beta)}}e^{\frac{\beta c}{1-c^2} \Tr U \lb U^{-1} \bar U\bar\lb \bar U^{-1}}dU 
=\int_{\UR_n^{(\beta)}}e^{\frac{\beta c}{1-c^2} \Tr \lb U \bar \lb U^{-1} }dU
=F_{n}^{(\beta)}\left(\frac{\beta c}{1-c^2}\lb,\bar\lb\right)
,\end{multline*}
thus yielding (\ref{1.20}). 

Letting $t\rg  \infty$, (equivalently $c\rightarrow 0$) 
in~\eqref{2.8} proves formula (\ref{1.28}) for the invariant measure, taking into account that $F_{n}^{( \beta)}(0,Y)={\rm vol}(\UR_n^{\beta})=1$.

\medbreak

\noindent {\bf (iii)}  {\em Proof of the transition probability (\ref{1.24}) and the invariant measure for the $(\lambda_t, \mu_t)$ process}.  
The proof of (\ref{1.24}) in Theorem \ref{Th:Dyson2} proceed along similar lines. First observe the identity 
 \be \begin{aligned}
 \lefteqn{\Prob   [B_t \in dB~ \big\vert ~(\mbox{spec} (B^{(n)}_0),\mbox{spec}(B^{(n-1)}_0))=(\bar \lb,\bar \mu) ]}\\
 &=
 \int_{\bar B\in \HB(\bar \lambda,\bar \mu)}\Prob \left[B_t\in dB~\big\vert ~B_0=\bar B\right]
~ \Prob   \left[B_0=d\bar B \big\vert (\mbox{spec} (B_0^{(n)} ),\mbox{spec}(B_0^{(n-1)} )=(\bar \lb,\bar \mu) \right]\label{condP}
 \end{aligned}\ee
 with
 $$
 \HB(\bar \lambda,\bar \mu)=
 \HB\cap \left\{ \bigl(\mbox{spec} (B^{(n)} ),\mbox{spec}(B_{}^{(n-1)} \bigr)=(\bar \lb,\bar \mu)\right\} 
 $$
 Next we compute the two probabilities in the integrand of the integral (\ref{condP}): 
%
%One rewrites (\ref{OU-tr}) using the Weyl integration formula and integrating out $\UR_{n-1}^{(\beta)}$  to wit:
%
\newline {(i)} The first integral equals $p(t;\bar B,B)dB$, as in (\ref{T5}). Since the Haar measure $dB$ is the product measure over all the free parameters, one will express $dB$ as the product of Haar measure $dB^{(n-1)}$ on the $(n-1)\times (n-1)$ minor and the measure $\prod_{1\leq i\leq n-1\atop{0\leq\ell\leq\beta -1}}dB_{in}^{(\ell)}dB_{nn}$ on the last row and column, remembering the expression (\ref{N}) for $ N_{n,\beta}$, thus giving:% =\frac{n}{2}(1+\frac{\beta}{2}(n-1)$,
\begin{multline}\label{cond1}
\begin{aligned}
\Prob[B_t\in dB\vert B_0=\bar B]
&=\frac{  Z^{-1}_{n,\beta}}{(1-c^2)^{N_{n,\beta}}}e^{-\frac{\beta}{2(1-c^2)}\Tr(B -c\overline{B})^2}dB\\
\\
&=\frac{  Z^{-1}_{n-1,\beta}}{(1-c^2)^{N_{n-1,\beta}}}e^{-\frac{\beta}{2(1-c^2)}\Tr(B^{(n-1)} -c\bar{B}^{(n-1)})^2}dB^{(n-1)}\\
\\
 &\quad\times\frac{  Z^{-1}_{n,\beta}  Z_{n-1,\beta}}{(1-c^2)^{(N_{n,\beta}-N_{n-1,\beta})}}e^{-\frac{\beta}{ (1-c^2)}
\left[ \sum_{1\leq i\leq n-1\atop{0\leq\ell\leq\beta -1}}(B_{in}^{(\ell)}-c\bar B_{in}^{(\ell)})^2+\frac{1}{2}(B_{nn}-c\bar B_{nn})^2\right]}    \\
 &\quad\times\prod_{i=1} ^{n-1}  \prod_{\ell=0} ^{\beta -1}dB_{in}^{(\ell)}dB_{nn}.
\end{aligned}
\end{multline}
As mentioned prior to (\ref{2.8}), one can set in (\ref{cond1}), 
\be\label{B0} dB^{(n-1)}=Z_{n-1,\beta}C^{-1}_{n-1,\beta}
|\Dt_{n-1}(\mu)|^{\beta}dU^{(n-1)}\prod_ {i=1}^{n-1} d\mu_i .\ee
In (\ref{2.22}), it was shown that upon conjugation by an appropriate matrix $U^{(n-1)}\in \mathcal{U}_{n-1}^{(\beta)}$, the matrix $B$ could be transformed in the bordered matrix $B_\bord$, as in  (\ref{2.22}) and (\ref{v}), with $(r_1u_1,\ldots,r_{n-1}u_{n-1})^\top=U^{(n-1)}v $ and $ | u_{i} | =1$.  Using the same inner-product as in the formula just preceding (\ref{1.21}), but for $n-1$-vectors, and using the associated norm $\lVert ~~\lVert$, one finds, using the above,
%
%\begin{equation}
%\begin{pmatrix}  U^{(n-1)}&0\\  0&1\end{pmatrix}\in \mathcal{U}_n^{(\beta)}, \text{ with }  U^{(n-1)}\in \mathcal{U}_{n-1}^{(\beta)},\label{subgrp}\end{equation}
%yields matrices of the form $B_\bord$, as in Lemma~\ref{lemma1}.
%
\be
\begin{aligned}
\lefteqn{\sum_{{1\leq i\leq n-1}\atop{0\leq \ell\leq \beta-1}}
(B_{in}^{(\ell)}-c\bar B_{in}^{(\ell)})^2
}\\&=\lVert v-c\bar v \rVert^2
\\
&=  \lVert U^{(n-1)}(v-c\bar v )\rVert^2
\\
&=   \lVert U^{(n-1)}v\rVert^2
+c^2  \lVert U^{(n-1)}\bar v\rVert^2
-2c\Re \langle U^{(n-1)}v,~U^{(n-1)}\bar v  \rangle
\\
\\
&=   \lVert U^{(n-1)}v\rVert^2
+c^2  \lVert U^{(n-1)}\bar v\rVert^2
-2c\Re \langle U^{(n-1)}v,~(U^{(n-1)}({\bar U}^{(n-1)})^{-1})({\bar U}^{(n-1)})\bar v) \rangle
\\
&= \sum_1^{n-1} r_i^2 +c^2 \sum_1^{n-1} \bar r_i^2
\\
&\hspace*{.5cm} -2c \Re \langle (  r_1  u_1,\ldots,  r_{n-1}   u_{n-1})^\top,~(U^{(n-1)}({\bar U}^{(n-1)})^{-1}) 
(\bar r_1\bar u_1,\ldots,\bar r_{n-1} \bar u_{n-1})^\top \rangle
\label{B1}\end{aligned}
\ee
%
% Since the expressions $u_i$ in (\ref{2.22}) have $ | u_{i} | =1$, 
%one of the measures appearing in (\ref{cond1}) can be decomposed into differentials involving the $r_i$'s 
%and volume elements on the unit sphere $S^{\beta-1}$, 
%
Given that  $U^{(n-1)}$ is fixed and that $\det(U^{(n-1)})=1$, and since the expressions $u_i$ in (\ref{2.22}) have $ | u_{i} | =1$, the differential below can be written in terms of a product of differentials $d(r_iu_i)$ along $S^{ \beta-1 }\subset \BR^{\beta}$, expressed  in polar coordinates, thus yielding differentials involving the $r_i$'s 
and volume elements on the unit sphere $S^{\beta-1}$:  
\begin{equation}\begin{aligned}\label{B2}
\prod_{i=1}^{n-1} 
\prod_{\ell = 0}^{\beta -1}dB_{in}^{(\ell)} 
 =\prod_{i=1}^{n-1}dv_i&=\prod_{i=1}^{n-1} d((U^{(n-1)})^{-1}(  r_1  u_1,\ldots,  r_{n-1}   u_{n-1})^\top)_i 
\\&=\vert \det(U^{(n-1)})^{-\beta}\vert \prod_1^{n-1} d(r_iu_i)
\\&=\prod_1^{n-1}r_i^{\beta -1}\,dr_i\,d\Omega_i^{(\beta -1)}(u_i)~~~~~~\mbox{~~~and~~~}~~~~~~dB_{nn}=dr_n.
\end{aligned}\end{equation}
Thus all together, setting (\ref{B0}), (\ref{B1}) and (\ref{B2}) in (\ref{cond1}), we have shown that
\be\begin{aligned}
\label{cond2}
% last expression then becomes 
%
\lefteqn{\Prob[B^{(n)}_t\in dB\vert B^{(n)}_0=\bar B]}
\\
=&\frac{C^{-1}_{n-1,\beta}}{(1-c^2)^{N_{n-1},\beta}}e^{-\frac{\beta}{2(1-c^2)}\sum^{n-1}_{i=1}(\mu^2_i+c^2\bar\mu_i^2)}%F_{n-1}^{(\beta)}\left(\frac{\beta c}{1-c^2}\mu,\bar\mu\right) \\
\\ & \times|\Dt_{n-1}(\mu)|^{\beta}
e^{\frac{\beta c}{1-c^2}\Tr \left( (  U^{(n-1)}(  \bar U^{(n-1)})^{ -1}  )^{-1}\mu     U^{(n-1)}    (\bar U^{(n-1)})^{ -1}  \bar \mu \right) }
dU^{(n-1)}\prod_1^{n-1}d\mu_i%^{(\beta)}
\\
 &\times\frac{  Z^{-1}_{n,\beta}  Z_{n-1,\beta}}{(1-c^2)^{(N_{n,\beta}-N_{n-1,\beta})}}e^{-\frac{\beta}{(1-c^2)}
\left(\left(\sum_1^{n-1}r_i^2+\frac{1}{2}r^2_n\right)+c^2\left(\sum_1^{n-1}\bar r_i^2+\frac{1}{2}
\bar r_n^2  \right)\right)}    \\
 &\times e^{\frac{2\beta c}{1-c^2}\Re \la (r_iu_i)_{1}^{n-1}, U^{(n-1)}(\bar U^{(n-1)})^{ -1} (\bar r_i\bar u_i)_{1}^{n -1}\ra}e^{\frac{\beta c}{1-c^2}r_n\bar r_n }\,dr_n\prod_1^{n-1}r_i^{\beta -1}\,dr_i\,d\Omega_i^{(\beta -1)}(u_i)
 \\
 %=&\frac{Z_{n,\beta}^{-1}Z_{n-1,\beta}}{(1-c^2)^{N_{n,\beta}}C_{n-1,\beta}} e^{-\frac{\beta}{2(1-c^2)}\sum^{n-1}_{i=1}(\lb^2_i+c^2\bar\lb_i^2)} e^{\frac{\beta c}{1-c^2}r_n\bar r_n }
 %\\
% &~~ {\mathcal G} _{n-1}^{( \beta)}   \Bigl(U^{(n-1)}(\bar U^{(n-1)})^{ -1};\frac{\beta c}{1-c^2} \mu,\bar \mu; ~\frac{\beta c}{1-c^2}(r_iu_i)_{1}^{n-1},(\bar r_i \bar u_i)_{1}^{n-1}\Bigr)\\
 %  &
%   |\Dt_{n-1}(\mu)|^{\beta}dU^{(n-1)}~dr_n~\prod_1^{n-1} r_i^{\beta -1}\,dr_i\,d\mu_i \,d\Omega_i^{(\beta -1)}(u_i)
 %  \\
 =&\frac{ Z_{n,\beta}^{-1}Z_{n-1,\beta}}{(1\!-\!c^2)^{N_{n,\beta}}C_{n-1,\beta}}
 e^{-\frac{\beta}{2(1-c^2)}\sum^{n }_{ 1}(\lb^2_i+c^2\bar\lb_i^2)} e^{\frac{\beta c}{1-c^2}r_n\bar r_n }
  \vert \Dt_n(\lb)\Dt_{n-1}(\mu)\vert \, |\Dt_{n-1}(\lambda,\mu)|^{\frac{\beta}2-1}\\
 &\times~~ {\mathcal G} _{n-1}^{( \beta)} 
   \Bigl(U^{(n-1)}(\bar U^{(n-1)})^{ -1};\frac{\beta c}{1-c^2} \mu,\bar \mu; ~\frac{\beta c}{1-c^2}(r_iu_i)_{1}^{n-1},(\bar r_i \bar u_i)_{1}^{n-1}\Bigr)\\
   &\times
  dU^{(n-1)}~   \,\prod_1^{n-1}d\Omega_i^{(\beta -1)}(u_i)
   \prod_1^{n-1}  d\mu_i \prod_1^{n }  d\lb_i
.
\end{aligned}\ee
In the last equality we have used identities (\ref{2.16}) and (\ref{2.15}) and the definition (\ref{Gint}) of ${\mathcal G}_{n-1}^{(\beta)}$.

\noindent {(ii)} The second probability in (\ref{condP}) takes on the following  value:
\be
\label{cond2'}\Prob   \left[B^{(n)}_0\in d\bar B \big\vert (\mbox{spec} (B^{(n)}_0),\mbox{spec}(B^{(n-1)}_0)=(\bar \lb,\bar \mu) \right]=d\bar U^{(n-1)}\frac{\prod_1^{n-1}d\Omega_i^{(\beta -1)}(\bar u_i)}{(\vol (S^{\beta-1}))^{n-1}}.
\ee
%Integrating out with regard to $\UR^{(\beta)}_{n-1}$ leads to
Indeed, the probability (\ref{cond2}), when $t\to \infty$ ( which amounts to letting $c\rg 0$), tends to the invariant measure for the Dyson Brownian motion; instead of the usual representation in the variables $(\lb_i,U^{(n)})$, this gives the expression of the GUE-probability in the variables $(\lb_i,\mu_j,u_k, U^{(n-1)})$   :
\be\begin{aligned}
\label{f}
\Prob[B^{(n)} \in dB]&=Z^{-1}_{n,\beta}e^{-\frac \beta 2\Tr B^2}dB%\\&=\Prob[B_t\in dB]
\\&=\lim_{t\to \infty}\Prob[B^{(n)}_t\in dB\vert B^{(n)}_0=\bar B]
\\
&=f_{n,\beta}( \lambda,\mu)
dU^{(n-1)}~   \,\prod_1^{n-1}d\Omega_i^{(\beta -1)}(u_i)
   \prod_1^{n-1}  d\mu_i \prod_1^{n }  d\lb_i
   \end{aligned}\ee
with (using ${\mathcal G}_{n-1}^{(\beta)}(U;0,\bar \mu;0,(\bar r_i\bar u_i)_1^{n-1})=1$)
\be\label{f'}
f_{n,\beta}( \lambda,\mu):=
\frac{ Z_{n,\beta}^{-1}Z_{n-1,\beta}}{ C_{n-1,\beta}}
 e^{-\frac{\beta}{2 }\sum^{n }_{ 1} \lb^2_i } 
 \vert \Dt_n(\lb)\Dt_{n-1}(\mu)\vert \, |\Dt_{n-1}(\lambda,\mu)|^{\frac{\beta}2-1}
 .\ee
This also shows that $ \HB(  \lambda,  \mu)\simeq  \UR_{n-1}^{(\beta)}\times \left(S^{(\beta-1)}\right)^{n-1}$. Using (\ref{f}), one checks that the conditional probability equals
 \be\begin{aligned}
\label{}
 \lefteqn{\Prob   \left[B^{(n)}_0\in d\bar B \big\vert (\mbox{spec} (B^{(n)}_0),\mbox{spec}(B^{(n-1)}_0)=( \bar \lb, \bar \mu) \right]}\\
 &=
\frac{\Prob   \left[B^{(n)}_0\in d\bar B ,~ (\mbox{spec} (B^{(n)}_0),\mbox{spec}(B^{(n-1)}_0)\in (d\bar \lb,d\bar \mu) \right]}
{\Prob   \left[ (\mbox{spec} (B^{(n)}_0),\mbox{spec}(B^{(n-1)}_0)\in(d\bar \lb,d\bar \mu) \right]}
\\
&=\frac{f_{n,\beta}( \bar\lambda,\bar\mu)
d\bar U^{(n-1)}~   \,\prod_1^{n-1}d\Omega_i^{(\beta -1)}(\bar u_i)
   \prod_1^{n-1}  d\bar\mu_i \prod_1^{n }  d\bar\lb_i
   }{f_{n,\beta}( \bar\lambda,\bar\mu)\prod_1^{n-1}  d\bar\mu_i \prod_1^{n }  d\bar\lb_i {\dis\int_{ \UR_{n-1}^{(\beta)}\times \left(S^{(\beta-1)}\right)^{n-1}  }}
d\bar U^{(n-1)}~   \,\prod_1^{n-1}d\Omega_i^{(\beta -1)}(\bar u_i)
}
   \\
&=\frac{ 
d\bar U^{(n-1)}~   \,\prod_1^{n-1}d\Omega_i^{(\beta -1)}(\bar u_i)
   }{    \left(\vol(S^{(\beta-1)}\right)^{n-1}  
},
\end{aligned}\ee
confirming expression (\ref{cond2'}).
%\newpage
%
Setting (\ref{cond2}) and (\ref{cond2'}) in (\ref{condP}) and using the integral (\ref{1.21}) and the identification just after (\ref{f'}), one computes:
\be\begin{aligned}
\lefteqn{p_{\lb,\mu}(t;(\bar \lb,\bar \mu),(  \lb,  \mu))d\lb d\mu
}\\
=&\int_{B _t\in \HB(  \lambda,  \mu)}\int_{\bar B  \in \HB(\bar \lambda,\bar \mu) }
\Prob \left[B^{(n)}_t\in dB~\big\vert ~B^{(n)}_0=\bar B\right]
\\
&~~ \Prob   \left[B^{(n)}_0\in d\bar B \big\vert (\mbox{spec} (B_0^{(n)} ),\mbox{spec}(B_0^{(n-1)} )=(\bar \lb,\bar \mu) \right]
\\
=&\int_{\left(\UR_{n-1}^{(\beta)}\times \left(S^{(\beta-1)}\right)^{n-1}\right)^2}
 \frac{ Z_{n,\beta}^{-1}Z_{n-1,\beta}C_{n-1,\beta}^{-1}}{(1\!-\!c^2)^{N_{n,\beta}}\vol(S^{(\beta-1)})^{n-1}}
 e^{-\frac{\beta}{2(1-c^2)}\sum^{n-1}_{i=1}(\lb^2_i+c^2\bar\lb_i^2)} e^{\frac{\beta c}{1-c^2}r_n\bar r_n }
\\
%&\times \\
 &\times~~ {\mathcal G} _{n-1}^{( \beta)} 
   \Bigl(U^{(n-1)}(\bar U^{(n-1)})^{ -1};\frac{\beta c}{1-c^2} \mu,\bar \mu; ~\frac{\beta c}{1-c^2}(r_iu_i)_{1}^{n-1},(\bar r_i \bar u_i)_{1}^{n-1}\Bigr)
   dU^{(n-1)}d\bar U^{(n-1)}\\
   &\times
  ~   \vert \Dt_n(\lb)\Dt_{n-1}(\mu)\vert \, |\Dt_{n-1}(\lambda,\mu)|^{\frac{\beta}2-1} \,\prod_1^{n-1}d\Omega_i^{(\beta -1)}(u_i)
  \prod_1^{n-1}d\Omega_i^{(\beta -1)}(\bar u_i)
   \prod_1^{n-1}  d\mu_i \prod_1^{n }  d\lb_i
\\
=& 
  \frac{ \hat Z_{n,\beta}^{-1}}{(1\!-\!c^2)^{N_{n,\beta}} }
 e^{-\frac{\beta}{2(1-c^2)}\sum^{n-1}_{i=1}(\lb^2_i+c^2\bar\lb_i^2)} e^{\frac{\beta c}{1-c^2}r_n\bar r_n }
  \vert \Dt_n(\lb)\Dt_{n-1}(\mu)\vert \, |\Dt_{n-1}(\lambda,\mu)|^{\frac{\beta}2-1}  d\mu  d\lb \\
 &\!\!\times\!\! \int_{  \left(S^{(\beta-1)}\right)^{2n-2}  }
 G _{n-1}^{( \beta)} 
   \Bigl( \frac{\beta c}{1\!-\!c^2} \mu,\bar \mu; ~\frac{\beta c}{1\!-\!c^2}(r_iu_i)_{1}^{n-1},(\bar r_i \bar u_i)_{1}^{n-1}\Bigr)
   \,\prod_1^{n-1}d\Omega_i^{(\beta -1)}(u_i)
  d\Omega_i^{(\beta -1)}(\bar u_i),
\end{aligned}\ee
where we used the translation invariance of $dU^{(n-1)}$ and $\vol({\mathcal U}^{(\beta)}_{n-1})=1$; also one checks the value of the constant $\hat Z_{n,\beta}^{-1}$ to be the one  given in footnote \ref{footZhat}. This establishes formula (\ref{1.24}) for the transition probability of the $(\lambda_t, \mu_t)$-process.
 
%\newpage

The statements concerning the invariant measures,  (\ref{1.28}) and 
(\ref{1.29}) follow immediately from~(\ref{1.20}), (\ref{1.21}),  
(\ref{1.24}), by letting $t\rg \infty$ in  the transition 
probability. 
This concludes  the proof of the formulae for the transition probabilities 
(\ref{OU-tr}), (\ref{1.20}) and invariant measure~(\ref{1.29}), appearing in 
Theorems~\ref{Th:Dyson} and~\ref{Th:Dyson2}.
\end{proof}

\begin{remark}
 The diffusion equation (\ref{1.12'}), which will be established 
in section \ref{SDE1}, can also be used to confirm the form of the invariant 
measure, at least for $\beta=2$. On general grounds, the density of the 
invariant measure, namely
\begin{equation} 
%dI_{Dy}^{(n,B)}(\lb,\mu)&=&
 I_{\lb\mu}(\lb,\mu):=C e^{-\frac{\beta}{2}\sum_1^n\lb^2_i}|\Dt_n(\lb)\Dt_{n-1}(\mu)|~|\Dt_n(\lb,\mu)|^{ \frac{\beta}{2}-1 }%\prod^n_1d\lb_i\prod_1^{n-1}d\mu_i
 %\no\\
%&=:&i_{Dy}^{(n,\beta)}(\lb,\mu)\prod_1^n
%d\lb_i\prod_1^{n-1}d\mu_i
,\label{3.10'}\end{equation} 
%\bea
%dI_{Dy}^{(n,\beta)}(\lb,\mu)=i_{Dy}^{(n,\beta)}(\lb,\mu)\prod_1^nd\lb_i\prod_1^{n-1}d\mu_i
%\eea
is a null vector of the forward equation, i.e.
\begin{equation} 
\AR^{\top} I_{\lb\mu}(\lb,\mu)= \bigl(\AR_{\lb}^\top  +\AR_{\mu}^\top +\AR^\top_{\lb \mu} \bigr) I_{\lb\mu}(\lb,\mu)=0,
\label{3.12}\end{equation} 
with $\mathcal{A}$ defined in (\ref{1.12'}).
For $\beta =2$, more is true; namely 
\begin{align} 
\AR_{_{\lb}}^\top (\lb)I_{\lb\mu}&=\frac{n(n-1)}{2}I_{\lb\mu},&
\AR_{_{\mu}}^\top (\mu)I_{\lb\mu}&=\frac{n(n-1)}{2}I_{\lb\mu}
.\label{3.13}\end{align} 
Once this is shown, %and thus from (\ref{3.12}), 
 it follows that
\begin{equation}
\AR^\top_{\lb\mu}  I_{\lb\mu}(\lb,\mu)=-n(n-1) I_{\lb\mu}(\lb,\mu).
\end{equation}
So it suffices to prove (\ref{3.13}). %the first line (although the second line can easily be proven by straightforward residue calculations).
  First observe that $\Dt_n(\lb)$ and $\Dt_{n-1}(\mu)$ are harmonic functions, i.e.
\begin{align*}
\sum_1^n\left(\frac{\partial}{\partial\lb_i}\right)^2\Dt_n(\lb)&=0,
&\sum_1^{n-1}\left(\frac{\partial}{\partial\mu_i}\right)^2\Dt_{n-1}(\mu)&=0,
\end{align*}
and also homogeneous functions so that acted upon by the Euler operators,
\begin{align*}
\sum_1^n\lb_i \frac{\partial}{\partial\lb_i}\Dt_n(\lb)&=\frac{n(n-1)}{2}\Dt_n(\lb),\\
\sum_1^{n-1}\mu_i\frac{\partial}{\partial\mu_i}\Dt_{n-1}(\mu)&=\frac{(n-1)(n-2)}{2}\Dt_{n-1}(\mu).
\end{align*}
Now compute from (\ref{1.5'})  and (\ref{3.10'})  that (remember $\Phi_n(\lb):=e^{-\frac{1}{2}\sum_1^n\lb^2_i}\\ \vert\Dt_n(\lb)\vert$)
\begin{align*}
\AR_{_{\lb}}^\top (\lb)I_{\lb\mu}
 &=\frac{1}{2}\sum_1^n \frac{\partial}{\partial\lb_i}(\Phi_n(\lb))^2 
\frac{\partial}{\partial\lb_i}\frac{\Dt_n(\lb)\Dt_{n-1}(\mu)  e^{-\sum_1^n\lb^2_j}}{(\Phi_n(\lb))^2}\\
&=-\frac{\Dt_{n-1}(\mu)}{2}\sum_1^n\frac{\partial}{\partial\lb_i}e^{-\sum_1^n\lb^2_j}\frac{\partial}{\partial\lb_i}\Dt_n(\lb)\\
\\
&= -\frac{1}{2}e^{-\sum_1^n\lb^2_j}\Dt_{n-1}(\mu)\sum_{i=1}^n\left(\frac{\partial^2}{\partial\lb_i^2} -2 \lb_i\frac{\partial}{\partial\lb_i}\right)\Dt_n(\lb) \\
\\
&=
 \frac{n(n-1)}{2}I_{\lb\mu}
\intertext{and also that}
\AR_{\mu}^\top (\mu)I_{\lb\mu}
&=
 \frac{1}{2} \sum^{n-1}_1 \frac{\partial}{\partial \mu_i} (\Phi_{n-1} (\mu))^2 
  \frac{\partial}{\partial \mu_i} \frac{\Delta_n (\lb) \Delta_{n-1} (\mu)   
  e^{-\sum^n_1\lb^2_j}}{(\Phi _{n-1} (\mu))^2}
 \\
 &= - \frac{1}{2} \Delta_n (\lb) e^{ -\sum^n_1 \lb^2_i} \sum^{n-1}_1 
\frac{\partial}{\partial \mu_i} e^{ \sum^{n-1}_1 \mu^2_i}  
\frac{\partial}{\partial \mu_i} \left(\Delta_{n-1}(\mu)
e^{  -\sum^{n-1}_1 \mu^2_i}\right)
 \\
  &=
  - \frac{1}{2} \Delta_n (\lb) e^{ -\sum^n_1 \lb^2_i}  \left(\sum^{n-1}_1 (\frac{\partial^2}{\partial \mu^2_i}- 2 %\sum^{n-1}_1 
   \mu_i \frac{\partial}{\partial \mu_i} )  \Delta_{n-1} (\mu)  - 2(n\!-\!1) \Delta_{n-1} (\mu)  \right)
  \\
 &=
   \frac{1}{2} \Delta_n (\lb) e^{  -\sum^n_1 \lb^2_i}  ( (n-1)(n-2)+2(n-1))\Delta_{n-1}(\mu) \\
   &= \frac{n(n-1)}{2} I_{\lb\mu}.
\end{align*}
This ends the proof of identities (\ref{3.13}).
\end{remark}

%\begin{remark}
%It is an immediate consequence of the proof of Theorem \ref{Th:Dyson2} that the volume form $dB$ cannot be simply expressed by the usual Weyl form, but also in terms of the spectra of both $dB$ and $dB^{(n-1)}$, together with some angular and border data.
%
%\begin{multline}
%dB= Z_{n,\beta}C^{-1}_{n,\beta}d\UR^{(\beta)}_n(\Dt_n(\lb))^{\beta}
%\prod^n_1d\lb_i\\=\ga_{n,\beta}(d\UR^{(\beta)}_{n-1}\prod_1^{n-1}d\Omega^{(\beta -1)}(u_i))\left(\prod_1^nd\lb_i\prod_1^{n-1}d\mu_i|\Dt_n(\lb)\Dt_{n-1}(\mu)|~|\Dt_n(\lb,\mu)|^{(\frac{\beta}{2}-1)}\right),
%\end{multline}
%with
%$d\Omega^{(\beta -1)}(u_i)$ the volume form on the sphere $S_i^{(\beta -1)}$ defined by $|B_{in}|^2=r^2_i$, $1\leq i\leq n-1$, and
%
%$$
%\ga_{n,\beta}=\frac{C^{-1}_{n-1,\beta}  Z_{n-1,\beta}}{2^{n-1}}.
%$$
%\end{remark}

\section{It\=o's Lemma and Dyson's Theorem}\label{SDE1}
\label{sec:ito}
To fix notation we repeat some well known facts from stochastic 
calculus in a way that will be useful later. 
Given a diffusion $X_t\in \BR^n$, given by the 
SDE\footnote{The subscript $t$ in $X_t$ and $B_t$ will often be omitted.}
\begin{equation}
dX_t=\sigma (X_t)db_t+a(X_t)dt,
\label{3.1}
\end{equation}
where $db_t$ is a vector of independent standard Brownian motions, 
where $x$, $a(x)\in \BR^n$ and $\sigma(x)$ an $n\times n$ matrix. 
Then the generator of this diffusion is given by
\begin{equation}
\mathcal{A}=\frac12 \sum_{i,j}(\sg\sg^\top)_{ij}(x) \frac{\partial^2}{\partial x_i\partial x_j}+\sum_ia_i(x) 
 \frac{\partial }{\partial x_i }
,\end{equation}
and, by straight forward verification, 
\begin{equation}\label{3.2}
\begin{split}
(\sg\sg^\top)_{ij}(x)& =\mathcal{A}(x_ix_j)  
-x_i \mathcal{A}(x_j)-x_j \mathcal{A}(x_i)
=
\left(\frac{dX_idX_j}{dt}\right) (x)
\\
a_i(x) &=\mathcal{A}x_i.
\end{split}
\end{equation}
The transition density $p(t,\bar x,x)$ is a solution of the forward equation (in $x$)
\begin{equation} 
\frac{\partial p}{\partial t}=\mathcal{A}^\top p.
\label{3.3}\end{equation}
Moreover for a function 
 $g:\BR^n \mapsto \BR^p$ with $g\in \mathcal{C}^2$,
the SDE for $Y_t=g(X_t)$ has the form
 \begin{equation} 
 dY_k=\sum_i \frac{\partial g_k}{\partial x_i}dX_i+\frac12
 \sum_{i,j}\frac{\partial^2 g}{\partial x_ix_j}dX_idX_j=
\sum_j(\sum_i \frac{\partial g_k}{\partial x_i}\sg_{ij}) db_j
+h_k dt ,\label{3.5}\end{equation}
 for $k=1$, \dots, $p$ and for some function $h_k$; 
i.e., the local martingale part only depends on first derivatives of $g$. 
This follows from the standard multiplication rules of stochastic
calculus ($dtdt=0$, $dtdb=0$ and $db_idb_j = \delta_{ij} dt$):
\begin{align}
dX_i\, dX_j &= (a_i\, dt + \sum_{\ell = 1}^n \sigma_{i\ell}\, db_\ell)
(a_j\, dt + \sum_{k = 1}^n \sigma_{jk}\, db_k)
\\
&= (\sum_{\ell = 1}^n \sigma_{i\ell}\, db_\ell)
(\sum_{k = 1}^n \sigma_{jk}\, db_k) = (\sigma \sigma^\top )_{ij}\, dt.
\end{align}

%Moreover the vector $(Y_1$, \ldots, $Y_p)_t$ is Markovian if and only if the generator $\mathcal{A}$ transforms the set of functions of $(Y_1$, \ldots, $Y_p)$ into itself. It is not if $\mathcal{A}$ fails to do this and the process $(Y_1$, \ldots, $Y_p)_t$ does not hit the boundary. 

More details can be found in any book on stochastic calculus,
for example~\cite{McK} or~\cite{Oksendal}.
As a warm-up exercise, we first prove Dyson's original result, 
namely the formulae for the SDE and for the generator of 
Theorem~\ref{Th:Dyson}, including some consequences.

\begin{proof}[Proof of~\eqref{1.4} and~\eqref{1.5'} in Theorem \ref{Th:Dyson}.]
   The Dyson process is invariant under conjugation by $U\in \UR_n^{(\beta)}$; 
to be precise from (\ref{DOUgen}),
$$
p (t; U\bar B U^{-1}, U B U^{-1})=p (t;  \bar B  ,   B  )
.$$
%Since (\ref{OU-tr}) is the most general solution of 
Therefore, we are free, at any fixed choice of $t$, to reset 
$$
 B(t)\mapsto UB(t)U^{-1}, ~~\text{for any}~U\in   \UR_n^{(\beta)}
 .$$
At any given time $t$, diagonalize the matrix $B$ to yield $\diag (\lb_1,\ldots,\lb_n)$ and consider the perturbation
$$
\diag (\lb_1,\ldots,\lb_n)+[{ d B}_{ij}],
$$
where 
one defines the $n\times n$ matrix, for $1\leq i<j\leq n$, 
\be[{ d B}_{ij}]:= \begin{pmatrix}
  \ddots  \\
    & 0& \cdots &  dB_{ij}^{(0)}+\sum_1^{\beta-1}  dB_{ij}^{(\ell)}e_\ell \\
 &\vdots & \ddots & \vdots \\
   &  dB_{ij}^{(0)}-\sum_1^{\beta-1}  dB_{ij}^{(\ell)}e_\ell& \cdots & 0\\
 &   && &\ddots&
  \end{pmatrix}
\label{perturb}\ee
and, for $i = 1$, \dots, $n$,
$$[{ d B}_{ii}]:=\diag(0,\ldots,d B_{ii},\ldots,0),
$$
with, by (\ref{1.3}), 
 \begin{align} 
 dB_{ij}^{(\ell)}\, dB_{i'j'}^{(\ell')}&=\dt_{ii'}\dt_{jj'}\dt_{\ell\ell'}\frac{dt}{\beta}
&
 dB_{ii}\,dB_{jj}&= 2\dt_{ij}\frac{dt}{\beta}&
dB_{ij}^{(\ell)}\, dB_{kk}&=0
.  \label{3.8}\end{align}
 Remember by Ito's formula~\eqref{3.5}, one only needs to keep track of at most second order changes of the arguments. 
 
 Thus for \emph{non-diagonal perturbations} ($i\neq j$), one checks\footnote{Remember, for $\beta=4$, the determinant is defined in (\ref{quatdet}).}
\begin{equation} 
 \label{3.10}
\begin{split}
 0&=\det \Bigl(\diag (\lb_1,\ldots,\lb_n)+[{ d B}_{ij}]-\lb I\Bigr)\Bigr|_{{\lb \mapsto \lb_\al+d\lb_\al }}\\
& = 
 \left.\prod_{{\ell=1}\atop{\ell \neq i,j}}^{n} ( \lb_\ell-\lb)
 \left( (\lb-\lb_i)(\lb-\lb_j)-\sum _{\ell=0}^{\beta-1}(dB^{(\ell)}_{ij})^2 \right)\right|_{{\lb \mapsto \lb_\al+d\lb_\al }}\\
 &=
\begin{cases}
  (\text{non-zero function})\times d\lb_\al,  & \text{for $\alpha\neq i$, $j$,}
  \\
  (\text{non-zero function})\times  ((\lb_i-\lb_j)d\lb_i -\sum _{\ell=0}^{\beta-1}(dB^{(\ell)}_{ij})^2)
 & \text{for $\alpha= i$,}
  \\
   (\text{non-zero function}) \times ((\lb_j-\lb_i)d\lb_j -\sum _{\ell=0}^{\beta-1}(dB^{(\ell)}_{ij})^2)
& \text{for $\alpha= j$,}
\end{cases}
\end{split}
\end{equation}
showing that an off-diagonal perturbation of the diagonal matrix $B(t)=\diag (\lb_1(t),\ldots,\lb_n(t))$ yields
$$
 d\lb_i=  \frac{\sum _{\ell=0}^{\beta-1}(dB^{(\ell)}_{ij})^2}{\lb_i-\lb_j},~~
d\lb_j=  \frac{\sum _{\ell=0}^{\beta-1}(dB^{(\ell)}_{ij})^2}{\lb_j-\lb_i},~~\text{and}~d\lb_\al=0 \text{  for  } \al \neq i,j
.$$
For \emph{diagonal perturbations} ($i=j$), one finds
\begin{multline*}
 \det \Bigl( \diag (\lb_1,\ldots,\lb_n)+[{ d B}_{ii}]-\lb I \Bigr)\Bigr|_{{\lb \mapsto \lb_\al+d\lb_\al }}  
=  \prod_{{\ell=1}\atop{\ell \neq i}}^{n} ( \lb_\ell-\lb)
 \left( \lb_i+dB_{ii}-\lb\right)\Bigr|_{{\lb \mapsto \lb_\al+d\lb_\al }}
  \\
=
\begin{cases}
 (\text{non-zero function}) \times d\lb_\al ,&   \text{for $\al\neq i$} \\
  (\text{non-zero function}) \times (dB_{\al\al} -d\lb_\al ) ,&   
\text{for $\al= i  $.}
\end{cases}
 \end{multline*}
and thus 
$$  d\lb_\al =0  ~   \text{for}~ \al\neq i ~   \text{and}~
 d\lb_\al  =dB_{\al\al}~   \text{for}~ \al = i
$$
Then summing up all the perturbations, one finds the SDE~\eqref{1.4}
 announced in Theorem~\ref{Th:Dyson}, using 
$dB_{ii}=-B_{ii}dt+\sqrt{\frac{2}{\beta}}db_{ii}=-\lb_i dt+\sqrt{\frac{2}{\beta}}db_{ii}$ and formula (\ref{3.8}),
\begin{align*}
d\lb_i&=\left(dB_{ii}+\sum_{j\neq i}
 \frac{\sum _{\ell=0}^{\beta-1}(dB^{(\ell)}_{ij})^2}{\lb_i-\lb_j}  \right)
\\
 &=\left(-\lb_i  +\sum_{j\neq i}\frac{1}{\lb_i-\lb_j}\right)dt+\sqrt{\frac{2}{\beta}}db_{ii}, &&\text{for $i=1$, \dots, $n$.}
\end{align*}
Then translating the SDE into the generator of the diffusion on 
$(\lb_1,\ldots,\lb_n)$, one finds, by (\ref{3.2}), that
\begin{equation}
\mathcal{ A}_{_{\lb}}= \sum_1^n\left(\frac1\beta  \frac{\partial^2}{\partial \lb_i^2}+
 \Bigl(-\lb_i  +\sum_{j\neq i}\frac{1}{\lb_i-\lb_j}\Bigr)
  \frac{\partial }{\partial \lb_i }\right),
    \label{3.13'}\end{equation}
  and thus 
\begin{align*}
 \mathcal{ A}_\lb^\top&=
\sum_1^n\left(\frac1\beta   \frac{\partial^2}{\partial \lb_i^2}-
  \frac{\partial }{\partial \lb_i }\bigl(-\lb_i  +\sum_{j\neq i}\frac{1}{\lb_i-\lb_j}\bigr)
 \right)\\
&=\frac{1}{\beta}
\sum^n_{i=1}\frac{\partial}{\partial\lb_i}\left(\Phi_n (\lb)\right)^{\beta}
\frac{\partial}{\partial \lb_i}\frac{1}{(\Phi_n (\lb))^{\beta}} 
, \end{align*}
with $\Phi_n (\lb)$ as in~(\ref{1.28}), confirming formula~(\ref{1.5'}) in 
Theorem~\ref{Th:Dyson}. Finally~$\mathcal{ A}_{\DOU} \lb_i= \mathcal{ A}_{_{\lb}}\lb_i$, mentioned in (\ref{restr}), follows from the fact that 
the generator $\mathcal{ A}_{\DOU}$ restricted to the 
functions~$(\lb_1,\ldots\lb_n)$ equals $ \mathcal{ A}_{_{\lb}}$, as a consequence of (\ref{3.1}) to (\ref{3.3}); 
of course, this holds for the spectrum of every principal minor of the 
matrix $B$. 
\end{proof}

\section{SDE for the Dyson process on the spectra of two consecutive minors}\label{SDE2}

In this section we prove the formulas (\ref{SDE-lambda0}) for the $\lb$- and $\mu$-SDE's, together with the generator (\ref{1.12'}).

 \begin{proof}[Proof of SDE (\ref{SDE-lambda0}) and generator~\eqref{1.12'} 
in Theorem~\ref{Th:Dyson2}]  Using the same idea as in the proof 
of~\eqref{1.4} and~\eqref{1.5'} in the  Section~\ref{sec:ito},
 we choose, at time $t$, to conjugate the 
matrix~$B$ so as to have the form $B_\bord$ of (\ref{2.22}) and let the 
matrix $B_\bord$ evolve according to the Dyson process. 
 We will consider only the first order effects on the $\lb$'s and ignore second order effects. 
 
 At first, we need to compute the (first order) variation of the $\lb_\al$'s as a function of the (first order) variation of the entries:

\noindent \emph{Case 1:\/} Consider the perturbation of $B_\bord$, 
using the notation (\ref{perturb}) for $[dB_{ij}]$, namely
\begin{equation} 
B_\bord + [dB_{ij}], \text{ for $1\leq i<j\leq n-1$}
.\label{3.18}\end{equation}
Up to first order, one must compute the effect of the perturbation on each 
of the eigenvalues $\lb_\al$, by explicitly computing the characteristic 
polynomial of the bordered matrix (\ref{2.22}) with the extra non-diagonal perturbation; then, by neglecting the second order terms in $dB$, one finds\footnote{Notice that $ 2\Re d  B_{ij}^\ast u_i u_j^\ast=
d  B_{ij}^\ast u_i  u_j^\ast+   {(d B_{ij}^\ast u_i  u_j^\ast} )^\ast
=2\sum_{\ell=0}^{\beta-1} dB_{ij}^{(\ell)} (u_i  u_j^\ast)^{(\ell)} $, using $(ab)^\ast=  b^\ast   a^\ast$. Remember, for $\beta=4$, quaternion multiplication  
does not commute and for the determinant of a matrix, use formula (\ref{2.24'}).}:
\begin{multline}
  0=\det(B_\bord+[dB_{ij}]-\lb I)\Bigr|_{\lb \mapsto \lb_\al+d\lb_\al } 
= \prod_{\ell=1}^{n-1} ( \mu_\ell-\lb)\\
\left(\sum_1^{n-1} \frac{r_\ell^2}{\lb-\mu_\ell}+r_n-\lb+\frac{2r_ir_j}{(\lb-\mu_i)(\lb-\mu_j)}\sum_{\ell=0}^{\beta-1} dB_{ij}^{(\ell)} (u_i  u_j^\ast)^{(\ell)}\right) \Bigr|_{\lb \mapsto \lb_\al+d\lb_\al }.
\end{multline}
Setting $\lb\mapsto\lb_\al+d\lb_\al $ in this expression, shows that the product $\prod_{\ell=1}^{n-1} ( \mu_\ell-\lb_\al-d\lb_\al)$ is of the form (non-zero-function)$+\text{(function)}\times d\lb_\al$, whereas the second part gives, by Taylor expanding in $\lb_\al$,  keeping in the expression first order terms only, evaluated by (\ref{2.28}), and noticing that the $0^{th}$-order term vanishes (again using (\ref{2.28})), we find
$$
-\frac{P_n'(\lb_\al)}{P_{n-1}(\lb_\al)}d\lb_\al+\frac{2r_ir_j}{(\lb_\al-\mu_i)(\lb_\al-\mu_j)}
\sum_{\ell=0}^{\beta-1} dB_{ij}^{(\ell)} (u_i  u_j^\ast)^{(\ell)}=0
.$$
Finally, adding up the first order contributions from all the perturbations $dB_{ij}^{(0)}+\sum_1^{\beta-1}  dB_{ij}^{(\ell)}$ , with $1\leq i<j\leq n-1$, yields
\be
 d\lb_\al =
 \frac{P_{n-1}(\lb_\al)}{P_n'(\lb_\al)}
 \sum_{1\leq i< j\leq n-1}
 \frac{2r_ir_j}{(\lb_\al-\mu_i)(\lb_\al-\mu_j)}
\sum_{\ell=0}^{\beta-1} dB_{ij}^{(\ell)} (u_i u_j^\ast)^{(\ell)}
.\label{3.19}\ee

\noindent \emph{Case 2:\/} For the perturbation $[dB_{ii}]$, with $i = 1$, \dots, 
$n-1$, again neglecting the second order terms,
\begin{align*}
  0&=\det(B_\bord+[dB_{ii}]-\lb I)\Bigr|_{\lb \mapsto \lb_\al+d\lb_\al } 
\\
&= \prod_{\ell=1}^{n-1} ( \mu_\ell+\dt_{\ell i}dB_{ii}-\lb_\al-d\lb_\al)\\
&\quad
\left(\sum_{\ell=1}^{n-1} \frac{r_\ell^2}{\lb_\al+d\lb_\al-\mu_\ell-\dt_{\ell i}dB_{ii}}+(r_n+\dt_{in}dB_{ii})-\lb_\al-d\lb_\al
\right)
\end{align*}
Upon expanding this expression as a function of $\lb_\al$, $\mu_i$ up to first order, noticing as before that the first part does not matter, and using again (\ref{2.28}), this leads to
 \begin{align}
  -\frac{P_n'(\lb_\al)}{P_{n-1}(\lb_\al)}d\lb_\al+%\sum_{k=1}^n
 \frac{r_i^2}{(\lb_\al-\mu_i)^2}
 dB_{ii} &=0 &&\text{  for  $i = 1$, \dots, $n-1$,}\\
-\frac{P_n'(\lb_\al)}{P_{n-1}(\lb_\al)}d\lb_\al+ dB_{nn}&=0
&& \text{  for  $i=n$,}
\end{align}
and thus summing up all the contributions coming from the $dB_{ii}$ for $i = 1$, \dots, $n$, one finds
\be
d\lb_\al=\frac{P_{n-1}(\lb_\al)}{P_n'(\lb_\al)}
\left(\sum_{i=1}^{n-1}\frac{r_i^2}{(\lb_\al-\mu_i)^2}
 dB_{ii}  +dB_{nn}\right)
\label{3.20}\ee

\noindent \emph{Case 3:\/} For the perturbation $[dB_{in}]$, 
$i=1$, \dots, $n-1$,
\begin{align*}
  0&=\det(B_{\text{\tiny bord}}+[dB_{in}]-\lb I)\Bigr|_{\lb \mapsto \lb_\al+d\lb_\al } 
\\
&= \prod_{\ell=1}^{n-1} ( \mu_\ell-\lb)\\
&\quad
\left(\sum_{k=1}^{n-1} \frac{r_k^2+ r_i(u_i\,d   B_{in}^\ast+  u_i^\ast\, dB_{in} )
\dt_{ik}}{\lb-\mu_k}+r_n-\lb\right) \Bigr|_{\lb \mapsto \lb_\al+d\lb_\al }
.\end{align*}
Then, using $u_i  { dB_{in}^\ast}+dB_{in}  u_i ^\ast=2\sum_{\ell=0}^{\beta-1} u_i^{(\ell)}dB_{in}^{\ell)}$, using formula (\ref{2.28}) and finally summing up over all  perturbations of the last row and column ($1\leq i\leq n-1$) yields
\begin{equation}
d\lb_\al=\frac{P_{n-1}(\lb_\al)}{P_n'(\lb_\al)}
 \sum_{i=1}^{n-1}\frac{2r_i \sum_{\ell=0}^{\beta-1} u_i^{(\ell)}dB_{in}^{(\ell)}}{ \lb_\al-\mu_i  }
 %dB_{ii}  +dB_{nn}
  %\right)
.\label{3.21}
\end{equation}
Then {\em summing up the three contributions} (\ref{3.19}), (\ref{3.20}) and (\ref{3.21}) gives us the total first order contribution to $d\lb_\al$:
$$
d\lb_\al=\frac{P_{n-1}(\lb_\al)}{P_n'(\lb_\al)}
\left\{
\begin{aligned}
 &\sum_{1\leq i< j\leq n-1}
 \frac{2r_ir_j}{(\lb_\al-\mu_i)(\lb_\al-\mu_j)}
\sum_{\ell=0}^{\beta-1} dB_{ij}^{(\ell)} (u_i u_j^\ast)^{(\ell)}
\\ 
&+  \sum_{i=1}^{n-1}\frac{r_i^2}{(\lb_\al-\mu_i)^2}
 dB_{ii}  +dB_{nn}
 +\sum_{i=1}^{n-1}\frac{2r_i \sum_{\ell=0}^{\beta-1} u_i^{(\ell)}dB_{in}^{(\ell)}}{ \lb_\al-\mu_i  }
 \end{aligned}\right\}
.$$
We now set the SDE's (\ref{1.3}) for the $dB_{ii},~dB_{ij}^{(\ell)}$ into the equation obtained above, thus yielding, by (\ref{3.5}),%
\begin{multline}
\label{3.22}
d\lb_\al=F_\al^{(n)} (\lb) \,dt\, + \\
\sqrt{\frac{2}{\beta}}\frac{P_{n-1}(\lb_\al)}{P_n'(\lb_\al)}
\left\{
\begin{aligned}
 &\sum_{1\leq i< j\leq n-1}
 \frac{ \sqrt{2} ~r_ir_j}{(\lb_\al-\mu_i)(\lb_\al-\mu_j)}
\sum_{\ell=0}^{\beta-1}  (u_i  u_j^\ast)^{(\ell)}db_{ij}^{(\ell)}
\\ 
&+  \sum_{i=1}^{n-1}\left(\frac{r_i }{ \lb_\al-\mu_i }\right)^2
 db_{ii}  +db_{nn}
  \\
& +\sqrt{2}\sum_{i=1}^{n-1}\frac{ r_i}{ \lb_\al-\mu_i  } \sum_{\ell=0}^{\beta-1} u_i^{(\ell)}db_{in}^{(\ell)}
 \end{aligned}\right\},
\end{multline}
for some function $F_\al^{(n)} (\lb)$ to be determined later.
Notice that in $\BR,~\BC$ and $\BH$, the norm $|v|$ satisfies $|vw|=|v|. |w|$ and $|v|=|  v^\ast |$.
Therefore, when $|u_i|=1$, we also have $|u_i  u_j^\ast|=1$, implying that
\begin{equation*}
d\tilde b_{in}:=\sum_{\ell=0}^{\beta-1} u_i^{(\ell)}db_{in}^{(\ell)}
\quad\text{ and }\quad
d\tilde b_{ij}:=\sum_{\ell=0}^{\beta-1}  (u_i  u_j^\ast)^{(\ell)}db_{ij}^{(\ell)}
\end{equation*}
are both standard Brownian motions on the sphere $S^{\beta-1}$; since they are different linear combinations, they are independent standard Brownian motions, and independent of $db_{ii},~1\leq i\leq n$. 
This is precisely formula~(\ref{SDE-lambda0}) of Theorem~\ref{Th:Dyson2}, namely
\begin{multline}\label{sde}
 d\lb_{\al}= F_\al^{(n)} (\lb) dt  +  
 \sqrt{\frac{2}{\beta}}
\frac{P_{n-1}(\lb_{\al})}{P'_n(\lb_{\al})}
 \\  \times \left(\sum_{1\leq i<j\leq n-1}\!\frac{\sqrt{2}~r_ir_j~\tilde{d  b_{ij}}}{(\lb_{\al}\!-\!\mu_i)(\lb_{\al}\!-\!\mu_j)}+\sum^{n-1}_{i=1}\frac{r^2_idb_{ii}}{(\lb_{\al}\!-\!\mu_i)^2}\right.%\qquad  
 \left.+\sum_{i=1}^{n-1}\frac{\sqrt{2}~r_i \tilde{d  b_{in}}}{\lb_{\al}\!-\!\mu_i}+db_{nn}\!\right)
. 
\end{multline} 
The SDE for the Dyson process induced on the $(n-1)\times (n-1) $ upper-left 
minor is given by the first formula of Theorem~\ref{Th:Dyson} with 
$n\mapsto n-1$ and $\lb\mapsto \mu$, yielding the formula 
in~(\ref{SDE-lambda0}). 
Therefore the product of the SDEs in (\ref{SDE-lambda0}), together with identity (\ref{r}) yields
%in the first computation and (\ref{3.2}), (\ref{restr}), (\ref{3.13''}) in the second, combined with some residue calculation, reads
\begin{equation} 
 \frac {d\lb_id\mu_j}{dt} =\frac{2}{\beta}\frac{P_{n-1}(\lb_i)}{P_n'(\lb_i)}\left(\frac{r_j }{\lb_i-\mu_j}\right)^2 
 =-\frac{2}{\beta}\frac{1}{(\lb_i-\mu_j)^2}\frac{P_{n-1}(\lb_i)P_n(\mu_j)}{P_n'(\lb_i)P_{n-1}'(\mu_j)}  .
\label{3.23}\end{equation}
Moreover,
\begin{align*}
\frac{d\lb_\al d\lb_\ga}{dt}&=
\mathcal{ A}_\DOU (\lb_\al \lb_\ga) 
- \lb_\al \mathcal{ A}_\DOU(\lb_\ga)
-\lb_\ga\mathcal{ A}_\DOU(\lb_{\al})\\
&=
\mathcal{ A}_{_{\lb}}(\lb_\al \lb_\ga)-\lb_\al
 \mathcal{ A}_{_{\lb}}(\lb_\ga)-\lb_\ga\mathcal{A}_{_{\lb}}(\lb_{\al})\\
&= 2 (\text{coefficient of $\frac{\partial^2}{\partial \lb_\al \lb_{\beta}}$ in $\mathcal{ A}_{_{\lb}}$})\\
&=\frac{2}{\beta}\dt_{\al\ga}
 \end{align*}
and similarly,
\begin{equation}
\frac{d\mu_id\mu_j}{dt}=\frac{2}{\beta}\dt_{ij}.
\label{diag}
\end{equation}
These identities can also be computed from the expressions (\ref{sde}) of $d\lb_\al$ in terms of the $\lb_i,\mu_j$, as done in the remark below. 
From Ito's formula~\ref{3.5}, it then follows that 
\begin{multline*}
(d\lb_1,\ldots,d\lb_n,d\mu_1,\ldots,d\mu_{n-1})
\\
=(\mathcal{A}_\DOU  \lb_1,\ldots,\mathcal{ A}_\DOU \lb_n, 
\mathcal{A}_\DOU \mu_1,\dots,\mathcal{A}_\DOU \mu_{n-1})dt
+\sigma (\lb,\mu)db_t,
\end{multline*}
where, according to (\ref{restr}) and (\ref{3.13''}), 
\begin{equation}
\label{3.23''}
\begin{split}
\mathcal{A}_\DOU (\lb_i)&=
\mathcal{A}_\lambda (\lb_i)=-\lb_i  +\sum_{j\neq i}\frac{1}{\lb_i-\lb_j},\\
\mathcal{A}_\DOU (\mu_i)&=\mathcal{A}_\mu (\mu_i)= -\mu_i  +\sum_{j\neq i}
\frac{1}{\mu_i-\mu_j},
\end{split}
\end{equation}
establishing the form of $F_\al^{(n)}(\lb)$ in (\ref{sde}), thus yielding (\ref{SDE-lambda0}).
Identities~(\ref{SDE-lambda0}), (\ref{3.23}) and~(\ref{diag}), 
together with Ito's formula~\eqref{3.5}, then establish the formula (\ref{1.12''}) 
for $\AR_{\lb\mu}$\footnote{As an alternative way, (\ref{restr}) 
and~(\ref{3.23}) suffice to establish~(\ref{1.12''}), with~(\ref{3.23}) 
needed to establish the coupling~$\AR_{\lb\mu}$.}.
\end{proof}

\begin{remark} Note that the identities (\ref{diag}) can be computed as well from the SDE (\ref{sde}), using residue calculations:
\begin{equation}
\label{3.23'}
\begin{split}
\frac{d\lb_\al d\lb_\ga}{dt}&=\frac{2}{\beta}\left(\frac{P_{n-1}(\lb_\al)}{P_n'(\lb_\al)}\right) 
\left(\frac{P_{n-1}(\lb_\ga)}{P_n'(\lb_\ga)}\right) \\
&\quad \left(
\sum_{1\leq i<j\leq n-1}\frac{ {2}~r_i^2r_j^2~}{(\lb_{\al}\!-\!\mu_i) (\lb_{\al}\!-\!\mu_j)(\lb_{\ga}\!-\!\mu_i) (\lb_{\ga}\!-\!\mu_j)}%
\right.
\\
&\qquad
\left.+\sum^{n-1}_{i=1}\frac{r^4_i }{(\lb_{\al}\!-\!\mu_i)^2(\lb_{\ga}\!-\!\mu_i)^2} 
+\sum_{i=1}^{n-1}\frac{2~r_i^2  }{(\lb_{\al}\!-\!\mu_i)(\lb_{\ga}\!-\!\mu_i)}+1\right) = \frac{2}{\beta}\dt_{\al\gamma}\\
\frac{d\mu_id\mu_j}{dt}&=\frac{2}{\beta}\dt_{ij}.
\end{split}
\end{equation}
\end{remark}
%\newpage

\bigbreak

%\newpage

\noindent We now turn to the proof of Corollaries~\ref{general} and \ref{Cor:unis}:

\begin{proof}[Proof of Corollary \ref{general} ]
Note, using logarithmic derivatives, that  
$$
\bigl(\mbox{Invariant measure (\ref{1.29})}\bigr) \beta {\mathcal{A}}^{\top}
\bigl(\mbox{Invariant measure (\ref{1.29})}\bigr)^{-1}
$$
is a quadratic polynomial in $\beta$, which by Theorem \ref{Th:Dyson2} vanishes for $\beta=1,2,4$ and thus it vanishes identically in $\beta$. That the process restricted to $\lambda$ or $\mu$ is the standard Dyson process follows from the form of the generator ${\mathcal{A}}^{\top}$. 

\end{proof}

\begin{proof}[Proof of Corollary~\ref{Cor:unis}.] 
In order to study the stochastic behavior of $\mu_\alpha-\lb_\alpha$ and $\lb_{\alpha +1}- \mu_\alpha$ when $\mu_\alpha$ gets close to $\lb_{\alpha }$ or $\lb_{\alpha +1}$, one rewrites the Brownian part of $d(\lb_{\al}-\mu_\al)$ as follows:
\begin{equation}
\label{SDE-lambda0'}
\begin{aligned}
\lefteqn{ \sqrt{\frac \beta 2} \mbox{Brownian part of}~d(\lb_{\al}-\mu_\al)}\\
=&
\frac{P_{n-1}(\lb_{\al})}{P'_n(\lb_{\al})
}   \left(\sum_{1\leq i<j\leq n-1}\!\frac{\sqrt{2}~r_ir_j~\tilde{d  b_{ij}}}{(\lb_{\al}\! - \!\mu_i)(\lb_{\al}\!-\!\mu_j)}+\sum^{n-1}_{{i=1}\atop{i\neq \alpha}}\frac{r^2_idb_{ii}}{(\lb_{\al}\!-\!\mu_i)^2}  %\qquad  
   \displaystyle{ +\sum_{i=1}^{n-1}\frac{\sqrt{2}~r_i \tilde{d  b_{in}}}{\lb_{\al}\!-\!\mu_i}+db_{nn}\!}
  \right)  
  \\
  &+\left(\frac{P_{n-1}(\lb_{\al})}{P'_n(\lb_{\al})}\frac{r_\al^2}{(\lb_\al-\mu_\al)^2}-1\right)db_{\al\al}
.\end{aligned}
\end{equation}
At first notice that for $\mu_\al \simeq \lb_\al$, one has, using the expression (\ref{r}) for $r_k^2$,
$$ 
P_{n-1}(\lb_\al)={\mathcal O}(\mu_\alpha-\lb_\al )
,~~~ r_\al=\Oh (\sqrt{\mu_\al-\lb_\al})
\mbox{~~~~~and~~~~~~}  r_i=\Oh(1)~~\mbox{for}~~i\neq \al, $$
from which one deduces that the first line on the right hand side of (\ref{SDE-lambda0'}) has order ${\mathcal O}(\sqrt{\mu_\alpha-\lb_\al })$. Using again (\ref{r}), the second line of (\ref{SDE-lambda0'})  multiplied with $P'_n(\lb_{\al})P'_{n-1}(\mu_{\al}) $  reads:
$$\begin{aligned}
P'_{n-1}(\mu_{\al}) P'_n(\lb_{\al})
 &\left(\frac{P_{n-1}(\lb_{\al})}{P'_n(\lb_{\al})}\frac{r_\al^2}{(\lb_\al-\mu_\al)^2}-1\right)
\\ \\
&=\left(\frac{P_{n-1}(\lb_\al)-P_{n-1}(\mu_\al)}{\lb_\al-\mu_\al}\right)
\left(\frac{P_{n }(\mu_\al)-P_{n }(\lb_\al)}{\mu_\al-\lb_\al}\right)- P'_{n-1}(\mu_{\al}) P'_n(\lb_{\al})
\\
&=\Oh (\mu_\al-\lb_\al),\end{aligned}
$$
Then 
$$
\begin{aligned}
\lefteqn{\frac{dt \mbox{-part of }~d(\mu_{\al}-\lb_\al)}{dt}\Bigr|_{\mu_\al=\lb_\al}}
\\&=
\Bigl(\lb_\al-\mu_\al+\sum_{{1\leq j\leq n-1}\atop{j\neq \al}}
\frac 1{\mu_\al-\mu_j}-
\sum_{{1\leq j\leq n}\atop{j\neq \al}}
\frac 1{\lb_\al-\lb_j}\Bigr)\Bigr|_{\mu_\al=\lb_\al}
\\&=
\sum_{{1\leq j\leq n-1}\atop{j\neq \al}} 
\frac{\mu_j-\lb_j}{(\lb_\al-\mu_j)(\lb_\al-\lb_j)}
+\frac{1}{\lb_n-\lb_\al}>0,\end{aligned}
$$
which follows from the inequalities (for $1\leq j\leq n-1$), 
$$
\mu_j-\lb_j\geq 0 ,~~~
(\lb_\al-\mu_j)(\lb_\al-\lb_j)\geq 0
~~~\mbox{~~and~~}~~~ \lb_n-\lb_\al>0.
$$
This proves the first relation (\ref{unis}), while the second one is done in a similar way. 
\end{proof}
 %\newpage

\section{The eigenvalues of three consecutive minors}
\label{sec:threeminors}
% for $\beta=2,4$, 

In this section we shall prove Theorem \ref{Theo:3minors}, which affirms that for the Dyson process the joint spectra of any three consecutive minors is not Markovian, although the Markovianess of the spectra holds for any one or any two consecutive minors.

%\bigskip

%\par\noindent
  Note that given an It\^o diffusion $X_t \in  \BR^n$, with stochastic differential equation
$dX_t = a (X_t) dt + \sigma (X_t) db_t$, as in  (\ref{3.1}), and generator $\mathcal{A}$,
the process restricted to $Y_i = \varphi_i (X) , 1\leq i \leq \ell$ is not Markovian (at least for generic initial conditions) if the generator fails to preserve the field of functions $\mathcal{F} (Y)$ generated by the $(Y_1,\ldots,Y_\ell):=(\varphi_1 (X),\ldots,\varphi_\ell(X)) $, i.e. 
\begin{equation}
\mathcal{A} \mathcal{F} (Y) \nsubseteq \mathcal{F} (Y).
\label{4.1}
\end{equation}
and provided the diffusion does not hit the $Y$-boundary of the domain. 

\medskip

\begin{proof}[Proof of Theorem~\ref{Theo:3minors}.]  In order to show the non-Markovianess of
\[
\Gamma := (\lb,\mu,\nu) = 
(\spec B, \spec B^{(n-1)}, \spec B^{(n-2)})
\]
 it suffices to find a function, such that the function, obtained by applying the Dyson-generator to it, is not a function of $(\lb,\mu,\nu)$. We pick a function of the product form  $xy = g(\Gamma)h(\Gamma)$, where
  $$x:=g(\Gamma):=\sum^{n-2}_{i=1} B_{ii}~~\text{and}~~y:=h(\Gamma):= \det B$$ 
  are two independent functions. Then, according to formula (\ref{3.2})
 \begin{equation}
 \mathcal{A}_\DOU xy=
   \frac{dxdy}{dt}+x\mathcal{A}_\DOU y+
   y\mathcal{A}_\DOU x.
\label{6.2} \end{equation}
Since $x$ and $\mathcal{A}_\DOU x$ are functions of $\nu$ only and since $y$ and $\mathcal{A}_\DOU y$ are functions of $\lambda$ only, $x\mathcal{A}_\DOU y+
   y\mathcal{A}_\DOU x$ is a function of $(\lb, \nu)$ only. Therefore, to establish non-Markovianess of $(\lb,\mu,\nu)$, it suffices to show that $\frac{dxdy}{dt}$ is not only a function of $(\lb,\mu,\nu)$. 
   Since, by It\^o's formula (\ref{3.5}), %(\ref{}1.3) and (\ref{}3.4)
\begin{align*}
dx\,dy &=
\sum^{n-2}_1 dB_{ii} 
\left(\sum^n_1  \frac{\partial\det  B}{\partial B_{jj}} dB_{jj} + 
\sum_{1\leq i < j \leq n } \sum_{\ell = 0} ^ { \beta-1}  
\frac{\partial  \det B}{\partial B^{(\ell)}_{ij}} dB^{(\ell)}_{ij}   \right)
\\
&=
\frac{2}{\beta} dt \sum^{n-2}_1    
\frac{\partial \det  B}{\partial B_{ii}} = 
\frac{2}{\beta}   \sum^{n-2}_{i=1} \det(\minor_{ii} (  B) ) dt ,
\end{align*}
it suffices to show that the right hand side is not a function of $(\lb,\mu,\nu)$ only.
Here $\minor_{ii}$ denotes removing row $i$ and column $i$ of the matrix.

%By a perturbation argument about 
%$  B^{(n-1)}= \diag (\mu_1,\ldots,\mu_{n-1}),$ one shows 
%$$\sum^{n-2}_{i=1} \det(\text{minor}_{ii} (  B) ) 
%\not\in \mathcal{F} (\Gamma),$$ 
%which proves the theorem. 

%\newpage
 
For example in the case $\beta=2, n=3$, this amounts to showing that the determinant of the lower-right $2\times 2$ principal minor of $B$ is not a function of $(\lb,\mu,\nu)$ only; to do this, it is convenient to reparametrize the matrix as
$$
B=\begin{pmatrix}
B_{11}&\rho_3e^{i\eta_3}&\rho_2e^{-i\eta_2}\\
\\
\rho_3e^{-i\eta_3}&B_{22}&\rho_1e^{i\eta_1}\\
\\
\rho_2e^{i\eta_2}&\rho_1e^{-i\eta_1}&B_{33}\end{pmatrix}
.$$
Using the following formulae
\begin{align*}
B_{11}&=\nu_1,\\
 B_{22}&=\mu_1+\mu_2-\nu_1,\\
 B_{33}&=\lb_1+\lb_2+\lb_3-\mu_1-\mu_2  \\
\rho_3^2&=(\mu_2-\nu_1)(\nu_1-\mu_1),
\end{align*}
the lower-right $2\times 2$ principal minor of $B$ reads
\begin{align*}
\det(\minor_{11} (  B) )&=B_{22} B_{33} - \rho_1^2  \\
&=(\mu_1+\mu_2-\nu_1)(\lb_1+\lb_2+\lb_3-\mu_1-\mu_2)-\rho_1^2.
\end{align*}
%It will now be shown that $\rho_1^2$ is not a function of $(\lb,\mu,\nu)$ only.
%This is to say each one of these expressions can be expressed in terms of pure spectral data $(\lb,\mu,\nu)$. 
%
One observes that
\begin{align*}
0&=\det B-\lb_1\lb_2\lb_3\\
&=B_{11}B_{22}B_{33}-\lb_1\lb_2\lb_3-\sum_{i=1}^3\rho^2_i B_{ii}+2\rho_1\rho_2\rho_3\cos(\eta_1+\eta_2+\eta_3)\\
&=F_1(\lb,\mu,\nu)-\rho^2_1\nu_1-\rho^2_2(\mu_1+\mu_2-\nu_1)\\
&\qquad +~2\rho_1\rho_2\sqrt{(\mu_2-\nu_1)(\nu_1-\mu_1)}~\cos(\eta_1+\eta_2+\eta_3)
\end{align*}
and
\begin{multline*}
0=\Tr B^2-(\lb^2_1+\lb^2_2+\lb^2_3)=\sum^3_{i=1} B_{ii}^2+2\sum_{i=1}^3\rho^2_i-(\lb^2_1+\lb_2^2+\lb^2_3)\\
=F_2(\lb,\mu,\nu)+2(\rho^2_1+\rho^2_2),
\end{multline*}
where $F_i(\lb,\mu,\nu)$ are functions of the spectral data $(\lb,\mu,\nu)$. 
Upon solving these two equations in $\rho_1$ and $\rho_2$, one notices that, 
in particular, $\rho_1$ is a function of $\cos(\eta_1+\eta_2+\eta_3)$ and the 
spectral data $(\lb,\mu,\nu)$, hence showing that 
$\det(\minor_{11} (  B) )$ is not a function of $(\lb,\mu,\nu)$ only; thus the 
same is true for $\mathcal{A}_{\text{\tiny Dys}}xy$. This proves that 
$\mathcal{A}_{\DOU}xy$ does not belong to the field of functions depending 
on $(\lb,\mu,\nu)$.

More generally, by a perturbation argument about 
 $  B^{(n-1)}= \diag (\mu_1,\ldots,\mu_{n-1}),$ one shows similarly that
 $$\sum^{n-2}_{i=1} \det(\minor_{ii} (  B) ) 
 \not\in \mathcal{F} (\lb,\mu,\nu),$$ 
for $\beta=2$ and $4$.    
      
   Finally, the boundary of the process $(\lb, \mu,\nu)$ 
is given by the subvariety where some of the $\mu_i$'s hit the $\lb_j$'s or the $\nu_k$'s; that is when $P_n(\mu_i)=0$ or $P_{n-1}(\nu_k)=0$ for some $1\leq i\leq n-1$ or for some $1\leq k\leq n-2$; % in other terms, in view of (\ref{r}), when any of the $r_i^2(\lb, \mu)=0$ for $1\leq i\leq n-1$ and
   $r_j^2(\mu,\nu)=0$ for $1\leq j\leq n-2$. From Corollary \ref{Cor:unis}, on sees that the process never reaches that boundary. % at least for $\beta=2$ and $4$, because near the boundary the process is modeled by the $\beta$-dimensional Bessel process. 
   This ends the proof of Theorem \ref{Theo:3minors}. 
\end{proof}

\bibliography{GUE_Minor_process2}{}
\bibliographystyle{alpha}
\end{document}